\theoremstyle{definition}
\def\fnum{equation} 
\newtheorem{Thm}[\fnum]{Theorem}
\newtheorem{Cor}[\fnum]{Corollary}
\newtheorem{Lem}[\fnum]{Lemma}
\newtheorem{Con}[\fnum]{Conjecture}
\newtheorem{Rem}[\fnum]{Remark}
\newtheorem{Pro}[\fnum]{Proposition}
\numberwithin{equation}{section}
\newcommand{\nn}{{\bf{n}}}
\newcommand{\reg}{{\text{Reg}}}
\newcommand{\sing}{{\text{Sing}}}
\def\RR{{\bold R}}
\def\SS{{\bold S}}
\newcommand{\dv}{{\text {div}}}
\newcommand{\e}{{\text {e}}}
\newcommand{\cL}{{\mathcal{L}}}
\newcommand{\eqr}[1]{(\ref{#1})}
\title[Minimal entropy]{The round sphere minimizes entropy among closed self-shrinkers}
\author{Tobias Holck Colding}%
\address{MIT, Dept. of Math.\\
77 Massachusetts Avenue, Cambridge, MA 02139-4307.}
\author{Tom Ilmanen}%
\address{Departement Mathematik, ETH Zentrum, 8092 Z\"urich, Switzerland.}
\author{William P. Minicozzi II}%
\address{Johns Hopkins University\\
Dept. of Math.\\
3400 N. Charles St.\\
Baltimore, MD 21218.}
\author{Brian White}%
\address{Stanford University, Dept. of Math.\\
bldg 380, Stanford, California 94305.}
\thanks{The first and third authors
were partially supported by NSF Grant DMS  
11040934, DMS
0906233,  and NSF FRG grants DMS 
 0854774 and DMS 0853501. The fourth author was partially supported by NSF grant DMS 1105330.}
\email{colding@math.mit.edu, tom.ilmanen@math.ethz.ch, minicozz@math.jhu.edu, and white@math.stanford.edu.}
\begin{document}

\maketitle

\begin{abstract}
The entropy of a hypersurface is  a geometric invariant that measures   complexity and is invariant under rigid motions and dilations.  It 
is given by  the supremum over all Gaussian integrals with varying centers and scales.  It is monotone under mean curvature flow, 
thus giving a Lyapunov functional.   Therefore,  the entropy of the initial hypersurface   bounds the entropy at  all future singularities.  
We  show here that not only does the round sphere have the lowest entropy of any  closed singularity, but there is a gap
to  the second  lowest.
\end{abstract}

\section{Introduction}

The $F$-functional of a hypersurface $\Sigma$ of Euclidean space $\RR^{n+1}$ is the   Gaussian surface area\footnote{Gaussian surface area has also been studied in convex geometry (see \cite{B} and \cite{Na}) and in theoretical computer science (see \cite{K} and \cite{KDS}).}
\begin{equation}
F(\Sigma)= \left( 4 \, \pi \right)^{ - \frac{n}{2} } \, \int_{\Sigma}\e^{-\frac{|x|^2}{4}}\, , 
\end{equation}
whereas the Gaussian entropy is the supremum over all Gaussian surface areas given by 
\begin{equation}
\lambda (\Sigma)=\sup  \, \,  \left( 4 \, \pi \, t_0 \right)^{ - \frac{n}{2} } \, \int_{\Sigma}\e^{-\frac{|x-x_0|^2}{4t_0}}\, .
\end{equation}
Here the supremum is taking over all $t_0>0$ and $x_0\in \RR^{n+1}$.  
Entropy is invariant under   rigid motions and dilations.

Mean curvature flow (``MCF'') is an evolution equation  where a one-parameter family of hypersurfaces $M_t \subset \RR^{n+1}$
evolves over time to minimize  volume, satisfying the equation
\begin{equation}
	\left( \partial_t x \right)^{\perp} =  -H\,\nn \, .
\end{equation}
Here    $H = \dv_{\Sigma} \,  \nn$ is the mean curvature, $\nn$ is the outward pointing unit normal and $x$ is the position vector.   
As a consequence of  Huisken's monotonicity,  entropy is monotone nonincreasing under  MCF.

A  hypersurface $\Sigma \subset \RR^{n+1}$ is    a  {\emph{self-shrinker}} if it is the $t=-1$ time-slice of a  MCF moving by rescalings, i.e., where   $\Sigma_t \equiv \sqrt{-t} \, \Sigma$ is a MCF; see \cite{A}, \cite{Ch}, \cite{KKM}, \cite{M} and \cite{N} for examples.  This is easily seen to be equivalent to the equation
\begin{equation}	\label{e:ss}
	H = \frac{ \langle x , \nn \rangle }{2} \, .
\end{equation}
Since \eqr{e:ss} is the Euler-Lagrange equation for $F$, 
 self-shrinkers  are critical points for  $F$. 

Under MCF, every closed hypersurface becomes singular and the main problem is to understand the singularities. 
 By Huisken's monotonicity and an argument of \cite{I1}, \cite{W2}, blow-ups of singularities of a MCF are self-shrinkers.

By section $7$ in \cite{CM1},  the entropy of a self-shrinker   is equal to the value of $F$ and, thus,
 no supremum is needed.  In \cite{St}, Stone computed the $F$ functional, and therefore the entropy,
 for generalized cylinders
$\SS^{k} \times \RR^{n-k}$. He showed that $\lambda (\SS^n)$ is decreasing in $n$ and
\begin{align}
	\lambda (\SS^1) = \sqrt{ \frac{2\pi}{\e} } \approx 1.5203 > \lambda (\SS^2) = \frac{4}{\e} \approx 1.4715 >
	\lambda (\SS^3) > \dots > 1 = \lambda (\RR^n) \, . 
\end{align}
Moreover, a simple computation shows that $\lambda (\Sigma \times \RR) = \lambda (\Sigma)$.

  \vskip2mm
 It follows from Brakke's theorem, \cite{Br}, that $\RR^n$ has the least entropy of any self-shrinker and, in fact, there is a gap to the next lowest.
Our main result is that the round sphere has the least entropy of any {\emph{closed}} self-shrinker.  

\begin{Thm}   \label{t:minentropy}
Given $n$, there exists $\epsilon =\epsilon (n) > 0$ so that if $\Sigma\subset \RR^{n+1}$ is a closed self-shrinker not equal to the round sphere, then $\lambda (\Sigma)\geq \lambda (\SS^n)+\epsilon$.  
Moreover, if $\lambda (\Sigma)\leq \min \{\lambda (\SS^{n-1}),\frac{3}{2}\}$, then 
$\Sigma$ is diffeomorphic to $\SS^n$.{\footnote{If $n > 2$, then $\lambda (\SS^{n-1}) < \frac{3}{2}$ and the minimum is unnecessary.}}
\end{Thm}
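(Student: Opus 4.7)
The plan is to combine three ingredients: the classification from the first and third authors' generic singularities paper, which identifies round spheres as the only F-stable closed self-shrinkers; a perturbation that strictly lowers the entropy at any F-unstable closed self-shrinker; and a singularity analysis of MCF under a low-entropy hypothesis that excludes intermediate-dimensional cylindrical tangent flows. I would first prove the diffeomorphism assertion, use it to deduce the strict inequality $\lambda(\Sigma)>\lambda(\SS^n)$ for every non-round closed shrinker, and then upgrade this to a uniform gap by a contradiction-compactness argument.

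For the diffeomorphism statement, given a closed $\Sigma$ with $\lambda(\Sigma) \le \min\{\lambda(\SS^{n-1}),\tfrac{3}{2}\}$, I would replace $\Sigma$ by a small perturbation $\Sigma'$ whose MCF has only multiplicity-one generic (cylindrical) tangent flows, with $\lambda(\Sigma')$ kept below the assumed bound. Huisken's monotonicity then forces every tangent flow to be some $\SS^k \times \RR^{n-k}$ with $\lambda(\SS^k) \le \lambda(\Sigma')$; Stone's table shows that $\lambda(\SS^k) \geq \lambda(\SS^{n-1}) > \min\{\lambda(\SS^{n-1}),\tfrac{3}{2}\}$ for $k<n$ (the $\tfrac{3}{2}$ is what rules out the cylinder $\SS^1 \times \RR^{n-1}$ in the low dimensions where $\lambda(\SS^{n-1})$ exceeds $\tfrac{3}{2}$), so the only admissible model is the round $\SS^n$. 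Every singularity is thus an extinction singularity at which a connected component contracts smoothly to a round point, so each component of the flow, and in particular $\Sigma$, is diffeomorphic to $\SS^n$.

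For the strict inequality $\lambda(\Sigma) > \lambda(\SS^n)$ when $\Sigma \neq \SS^n$ is closed: if $\lambda(\Sigma) > \min\{\lambda(\SS^{n-1}),\tfrac{3}{2}\}$ then the inequality is immediate from Stone's table, so assume the reverse, in which case $\Sigma \cong \SS^n$ by the first part. Since the round sphere is the unique F-stable closed shrinker, $\Sigma$ is F-unstable; choosing an unstable variation transverse to translations and dilations (which are trivially F-unstable directions but leave $\lambda$ unchanged by its symmetries), I would produce a nearby closed hypersurface $\Sigma'$ with $\lambda(\Sigma') < \lambda(\Sigma)$. Running MCF from $\Sigma'$ and applying the diffeomorphism step already established, the flow extinguishes at round spherical singularities, so by Huisken monotonicity $\lambda(\SS^n) \le \lambda(\Sigma') < \lambda(\Sigma)$.

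To upgrade to a uniform gap, suppose for contradiction that $\Sigma_i$ are closed non-round shrinkers with $\lambda(\Sigma_i) \to \lambda(\SS^n)$; eventually $\lambda(\Sigma_i)$ falls below $\min\{\lambda(\SS^{n-1}),\tfrac{3}{2}\}$, so $\Sigma_i \cong \SS^n$. Smooth compactness for closed self-shrinkers with bounded entropy, after translation, extracts a smooth subsequential limit $\Sigma_\infty$ with $\lambda(\Sigma_\infty) = \lambda(\SS^n)$, which by the strict inequality must be a round sphere. A rigidity statement — that the round sphere is isolated among self-shrinkers up to rigid motion — then forces $\Sigma_i = \SS^n$ for large $i$, a contradiction. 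The main obstacle is precisely this last step: both the required compactness, which needs uniform a priori curvature and diameter bounds for closed shrinkers with entropy approaching $\lambda(\SS^n)$, and the isolation/rigidity of the round sphere among self-shrinkers, go well beyond the purely variational strict inequality and are where the technical heart of the argument should lie.
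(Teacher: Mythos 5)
Your outline correctly identifies the three main ingredients, but two of the steps contain genuine gaps that the paper is specifically designed to circumvent.

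First, for the diffeomorphism step you propose to ``replace $\Sigma$ by a small perturbation $\Sigma'$ whose MCF has only multiplicity-one generic (cylindrical) tangent flows.'' This is precisely the dynamical heuristic that the authors state explicitly they \emph{cannot} make rigorous: the fact that generic MCF develops only cylindrical singularities is an open conjecture, and the classification of $F$-stable shrinkers in \cite{CM1} does not by itself give a perturbation of the initial data whose subsequent flow avoids non-cylindrical tangent flows. What the paper does instead (Lemma \ref{l:perturb}) is perturb the closed shrinker \emph{inward along the first eigenfunction of $L$}, so that the perturbed hypersurface satisfies the strict rescaled mean convexity condition $H - \frac{1}{2}\langle x,\nn\rangle > 0$. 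This condition is preserved under rescaled MCF (Lemma \ref{l:mcvx1}), forces a finite-time singularity (Lemma \ref{l:sing}), and --- via the Simons-type tensor identity, the quotient estimate $|A|\le C\,\e^{-t}|H-\tfrac12\langle x,\nn\rangle|$, Federer dimension reduction, and the $H\ge 0$ classification (Theorem \ref{t:huisken}) --- forces every multiplicity-one tangent flow to be some $\SS^k\times\RR^{n-k}$ with $k>0$. Sections 1--6 of the paper exist exactly to supply this chain, and without it your ``only cylindrical tangent flows'' assumption is unsupported.

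Second, for the uniform gap you propose smooth compactness of closed self-shrinkers with bounded entropy followed by rigidity/isolation of the round sphere, and you correctly flag this as the main obstacle. It is more than an obstacle: neither ingredient is available in general dimension $n$ (the compactness result \cite{CM3} is for surfaces in $\RR^3$ with a genus bound, and no analogue is established for $n\ge 3$), and the paper does not prove nor invoke an isolation theorem for $\SS^n$. The paper's gap argument sidesteps both. Given a minimizing sequence $\Sigma_i$, one perturbs, runs rescaled MCF until the spherical singularity, and rescales the flow about that singularity. Because the tangent flow is the round sphere, one can choose the time origin so each rescaled flow $M_{i,t}$ is, for all $t\geq 0$, a $C^2$-graph over $\SS^n$ of norm $\leq\epsilon$ with norm \emph{exactly} $\epsilon$ at $t=0$ (here one uses that the $\Sigma_i$ are not strictly convex by Huisken, so the perturbed flow starts outside the $\epsilon$-neighborhood). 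Compactness of the flows is now elementary since everything is graphical over a fixed compact model. The limit flow $M_t$ satisfies $F(M_0)\le\lambda(\SS^n)=\lim_{t\to\infty}F(M_t)$, and since rescaled MCF is the gradient flow of $F$, the flow is constant --- every $M_t$ equals $\SS^n$ --- contradicting that $M_0$ lies at $C^2$-distance exactly $\epsilon$ from $\SS^n$. No a priori compactness of shrinkers and no isolation statement is needed; the monotone Lyapunov structure of $F$ replaces them.
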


  Theorem \ref{t:minentropy} is suggested by the dynamical approach to MCF of
   \cite{CM1} and \cite{CM2}.   The  idea is that MCF starting at  a  closed $M$ becomes singular, the corresponding self-shrinker has lower entropy and,
 by  \cite{CM1},   the only   self-shrinkers that cannot be perturbed away are   $\SS^{n-k} \times \RR^k$ and   $
 \lambda (\SS^{n-k} \times \RR^k) \geq \lambda (\SS^n)$.    However, we are not  able to make this approach rigorous.  One of the difficulties is that if the self-shrinker is non-compact and we perturb it, then {\it a priori} it may flow smoothly without ever 
 becoming singular.{\footnote{Theorem $0.10$ in \cite{CM1} is proven using an argument along these lines.}}  To overcome this, we combine results from  \cite{CM1} and \cite{IW}.

The dynamical picture also suggests two closely related conjectures; the first is for any closed hypersurface and the second is for   self-shrinkers:

\begin{Con}	\label{c:minentropy}
Theorem \ref{t:minentropy} holds with $\epsilon = 0$  for any closed hypersurface $M^n$ 
with $n\leq 6$.
\end{Con}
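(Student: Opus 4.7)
The dynamical approach sketched in the introduction suggests the following plan. Let $M = M_0$ be a closed hypersurface in $\RR^{n+1}$ with $n \leq 6$ and $M \neq \SS^n$. Because $M$ is closed and bounded, the mean curvature flow $M_t$ starting at $M$ becomes singular at some finite time $T>0$. By Huisken's monotonicity the entropy $\lambda(M_t)$ is nonincreasing along the flow, and by the tangent flow analysis of \cite{I1} and \cite{W2} at any singular space-time point one extracts a non-flat self-shrinker $\Sigma \subset \RR^{n+1}$ with $\lambda(\Sigma) \leq \lambda(M)$. The task thus reduces to proving
\[
\lambda(\Sigma) \;\geq\; \lambda(\SS^n)
\]
for every non-flat self-shrinker $\Sigma \subset \RR^{n+1}$ when $n \leq 6$.

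I would split into cases according to the geometry of $\Sigma$. If $\Sigma$ is closed, Theorem~\ref{t:minentropy} supplies the inequality (in fact with a strict gap), so in particular $\lambda(\Sigma) \geq \lambda(\SS^n)$. If $\Sigma$ is a generalized cylinder $\SS^{n-k} \times \RR^k$ with $k \geq 1$, then the identity $\lambda(\Sigma \times \RR) = \lambda(\Sigma)$ stated in the introduction together with Stone's monotonicity $\lambda(\SS^{n-k}) \geq \lambda(\SS^n)$ closes the case. What remains---and this is the only serious case---is that $\Sigma$ is non-compact and not a generalized cylinder.

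To handle this remaining case I would combine the $F$-stability classification of \cite{CM1} with a perturbation-and-iteration scheme: the only entropy-stable self-shrinkers are planes and generalized cylinders, so any exceptional $\Sigma$ admits a normal variation that strictly decreases $F$, and one would try to realize this as a slight perturbation of the initial hypersurface $M$ whose flow develops a strictly lower-entropy singularity. Iterating, combined with a compactness theorem for self-shrinkers of entropy below $\lambda(\SS^n)$ in low dimensions (in the spirit of \cite{IW}), one would aim to contradict the Brakke entropy gap above $\lambda(\RR^n)=1$. The dimension restriction $n \leq 6$ would enter through White's regularity theorem and the absence of stable singular minimal cones in low dimensions, which is what one would use to keep the putative limit of the iteration smooth.

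The principal obstacle, explicitly acknowledged in the introduction, is that a compactly supported perturbation of a non-compact $\Sigma$ may produce an MCF that exists smoothly for all time and never becomes singular, so the iteration can simply stall. Overcoming this seems to require either a genuine classification of low-entropy non-compact self-shrinkers in dimensions $n \leq 6$, or a substantially more delicate perturbation argument that forces singularity formation in a controlled way---for instance, perturbations that remain confined to a bounded region carrying a closed piece of $M$. I expect this forcing-of-singularity step, rather than any of the case analysis above, to be the crux of any proof of the conjecture.
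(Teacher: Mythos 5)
The statement you are addressing is labelled Conjecture \ref{c:minentropy} in the paper: the authors present it explicitly as an open problem, so there is no internal proof to compare your proposal against. Your reduction of the closed-hypersurface question to a bound for non-flat self-shrinkers is precisely the dynamical outline the authors sketch in the introduction, and you correctly locate the same obstruction they identify there: a compactly supported perturbation of a non-compact self-shrinker may yield a mean curvature flow that remains smooth for all time, so the entropy-decreasing iteration may simply stall. This is exactly why the paper's Theorem \ref{t:minentropy} is restricted to \emph{closed} self-shrinkers, where Lemma \ref{l:sing} forces a finite-time singularity of the rescaled flow.

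Two technical remarks on your outline as it stands. First, the object extracted from a tangent flow is, a priori, only an $F$-stationary rectifiable varifold, not a smooth self-shrinker, so a regularity step is needed before any form of the self-shrinker conjecture can be applied; in the closed case the authors supply this via Proposition \ref{p:tc2}, which uses the curvature bound $|A|\leq C\,H$ propagated by the rescaled mean convex flow (not a dimensional restriction) together with the entropy bound $\lambda<3/2$, so your appeal to low-dimensional regularity of minimal cones is a plausible but distinct route. Second, you take the reduction to Conjecture \ref{c:c2} as the main content, which matches the paper's framing, but the crux you correctly flag --- forcing singularity formation when the self-shrinker one encounters is non-compact --- is left unresolved both by you and by the authors. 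Until that mechanism is supplied, what you have written is, as you yourself acknowledge, a well-oriented outline of the difficulty rather than a proof, and that is consistent with the conjectural status of the statement in the paper.
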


\begin{Con}	\label{c:c2}
Theorem \ref{t:minentropy} holds   for any {\emph{non-flat}} self-shrinker $\Sigma^n \subset \RR^{n+1}$ 
with $n\leq 6$.

\end{Con}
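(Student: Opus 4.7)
The plan is to adapt the dynamical/perturbation strategy that motivates Theorem~\ref{t:minentropy} to the non-compact setting, exploiting the restriction $n\leq 6$ to control the singularity structure of the mean curvature flow. First, if $\Sigma=\SS^{n-k}\times\RR^k$ is a generalized cylinder with $k\geq 1$, the identity $\lambda(\Sigma\times\RR)=\lambda(\Sigma)$ combined with Stone's explicit values gives $\lambda(\Sigma)=\lambda(\SS^{n-k})\geq\lambda(\SS^{n-1})>\lambda(\SS^n)$, with an explicit gap independent of $\Sigma$. So generalized cylinders (other than $\SS^n$ itself) are handled directly, and the remaining case is a non-flat, non-cylindrical, non-spherical self-shrinker $\Sigma$.

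For such $\Sigma$, I would invoke the classification of entropy-stable self-shrinkers from \cite{CM1}: among smooth complete embedded self-shrinkers with polynomial volume growth, the only $F$-stable examples are the generalized cylinders and the hyperplane. Hence $\Sigma$ is $F$-unstable, and one can construct a compactly supported graphical normal perturbation $\widetilde\Sigma$ with $\lambda(\widetilde\Sigma)\leq\lambda(\Sigma)-\delta$, where $\delta>0$ is controlled by the bottom of the spectrum of the linearized stability operator on $\Sigma$. Running MCF from $\widetilde\Sigma$, Huisken's monotonicity forces $\lambda$ to be non-increasing along the flow, and every tangent flow at a singularity (in the sense of \cite{I1}, \cite{W2}) is a self-shrinker $\Sigma'$ with $\lambda(\Sigma')\leq\lambda(\widetilde\Sigma)<\lambda(\Sigma)$. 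Iterating this perturb--flow--blow-up procedure produces a sequence of self-shrinkers with strictly decreasing entropies, and the limit of any convergent subsequence must itself be $F$-stable, hence a generalized cylinder or hyperplane. A quantitative compactness theorem for self-shrinkers of bounded entropy, together with a uniform lower bound on the entropy drop $\delta$ in terms of how unstable the shrinker is, would close the argument and produce the claimed gap $\epsilon=\epsilon(n)>0$.

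The dimension restriction $n\leq 6$ should enter when controlling the regularity of the perturbed flow: for entropies below $\lambda(\SS^n)+\epsilon<2$, Brakke-type regularity combined with low-dimension minimal surface theory (paralleling the classical regularity of stable minimal hypersurfaces in $\RR^{n+1}$ for $n\leq 6$) should force tangent flows to be smooth multiplicity-one self-shrinkers, keeping the iteration within the smooth class. The \textbf{main obstacle} is non-compactness itself: starting from a non-compact $\widetilde\Sigma$, the MCF need not develop a singularity in finite time, and mass can escape to infinity while the entropy decreases to a non-generic limiting value not realized by any smooth self-shrinker. In the closed case of Theorem~\ref{t:minentropy} this pathology is ruled out by combining \cite{CM1} with \cite{IW}, but no analogous tool is currently available for non-compact self-shrinkers. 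Upgrading such a tool to the non-compact setting---perhaps via exhaustion by compact pieces of $\Sigma$ with carefully controlled boundary behavior, or via a non-compact version of the generic singularity construction---is the crux of the conjecture, and explains why it is stated as a conjecture rather than a theorem.
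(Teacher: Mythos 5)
The statement you were given is Conjecture~\ref{c:c2}, which the paper itself leaves open; there is no proof of it anywhere in the paper, so there is nothing to compare your argument against. Your write-up correctly recognizes this: you sketch the perturb--flow--blow-up strategy that motivates the conjecture and then identify the exact obstruction the authors themselves name in the introduction, namely that a non-compact self-shrinker, once perturbed, may flow smoothly forever without ever producing a singularity from which to extract a lower-entropy limit shrinker. So your ``proof'' is, appropriately, a diagnosis of why no proof is presently available, and that diagnosis agrees with the paper.

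Two points worth adding. The paper records that Conjecture~\ref{c:c2} is already known for $n=1$ via the Abresch--Langer classification of self-shrinking curves, and that Conjecture~\ref{c:minentropy} is the missing ingredient that would let the dynamical outline go through and yield Conjecture~\ref{c:c2}; your sketch mentions neither, and organizing your reduction around the second fact would isolate the genuinely open step more sharply than trying to run the flow argument directly in the non-compact setting. Also, your suggestion that the restriction $n\leq 6$ enters through regularity of tangent flows by analogy with stable minimal hypersurfaces is plausible but not something the paper asserts; the dimension bound appears without explanation, so treat that part of your reasoning as speculation rather than established fact.
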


Both conjectures are true for curves, i.e., when $n=1$.  The first conjecture
follows for curves by combining Grayson's theorem, \cite{G} (cf. \cite{GaHa}), and the monotonicity of $\lambda$ under curve shortening flow.  The second conjecture follows for curves from the classification of self-shrinkers by Abresch and Langer, \cite{AbL}.

Conjecture \ref{c:minentropy} would allow us to carry out the outline above to show that any closed hypersurface has entropy at least that of the sphere, proving Conjecture \ref{c:c2}.

Furthermore, one could  ask which self-shrinker has the third least entropy, etc.  
It is easy to see that the entropy of the ``Simons cone'' over $\SS^k \times \SS^k$ in $\RR^{2k+2}$ is asymptotic to $\sqrt{2}$  as $k\to \infty$, which is also the limit of
$\lambda (\SS^{2k+1})$.  Thus, as the dimension increases, the Simons cones have lower entropy than some of the generalized cylinders.
For example,  the cone over $\SS^2 \times \SS^2$ has  entropy $\frac{3}{2}  < \lambda (\SS^1 \times \RR^4)$.  In other words, already for $n=5$, $\SS^k \times
\RR^{n-k}$ is not a complete list of the lowest entropy self-shrinkers.

\vskip2mm
It is easy to see that if an immersed hypersurface has entropy strictly less than two, then it is embedded, hence,   we will always assume embeddness below.

\section{Perturbing a self-shrinker}

Throughout this paper, unless otherwise mentioned, $L$ will be the second variation operator for the $F$ functional from section $4$ of \cite{CM1}, that is, 
\begin{equation}
	L = \cL + |A|^2 + \frac{1}{2} = \Delta - \langle \frac{x}{2} , \nabla \cdot \rangle + |A|^2 + \frac{1}{2}\, ,
	\end{equation}
where $A$ is the second fundamental form and the second equality defines the operator $\cL$.

\vskip2mm
The first step is to  perturb  a closed self-shrinker inside itself, while reducing the entropy and making the self-shrinker version of mean curvature positive.  This uses the classification of stable self-shrinkers from \cite{CM1}.

\begin{Lem}	\label{l:perturb}
If $\Sigma^n \subset \RR^{n+1}$ is a closed self-shrinker (for any $n$) and $\Sigma$ is not round, then
there is a nearby hypersurface $\Gamma$ with the following properties:
\begin{enumerate}
\item $\lambda (\Gamma) < \lambda (\Sigma)$.
\item $\Gamma$ is inside of $\Sigma$, i.e., the compact  region of $\RR^{n+1}$ bounded by $\Sigma$ contains $\Gamma$.
\item  $ \left( H - \frac{1}{2} \, \langle x , \nn \rangle \right) > 0$ on $\Gamma$.
\end{enumerate}
\end{Lem}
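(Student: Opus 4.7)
The plan is to perturb $\Sigma$ slightly inward along a positive eigenfunction of the stability operator $L$, and to exploit the fact that such an eigenfunction is weighted-$L^2$ orthogonal to the scaling and translation eigenfunctions so that the entropy (not just the $F$-functional) drops.

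Since $\Sigma$ is closed, $L$ is self-adjoint with respect to the weighted measure $\e^{-|x|^2/4}$ on $\Sigma$ and has discrete spectrum. Let $\lambda_1$ be its largest eigenvalue with associated eigenfunction $u$, which may be taken to be positive by the standard Perron--Frobenius/Courant argument. The identity $LH=H$ on any self-shrinker (Section~5 of \cite{CM1}) shows that $1$ is in the spectrum. Since $\Sigma$ is not round, Huisken's classification of closed self-shrinkers with $H\ge 0$ forces $H$ to change sign, so $H$ is not a top eigenfunction and $\lambda_1>1$; in particular, $Lu=\lambda_1 u>0$ everywhere on $\Sigma$.

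Set $\Gamma_s=\{x-s\,u(x)\,\nn(x):x\in\Sigma\}$ for $s>0$ small, where $\nn$ is the outer unit normal. Since $u>0$, $\Gamma_s$ is a smooth embedded hypersurface strictly inside $\Sigma$, giving (2). A direct variational computation for $\phi:=H-\tfrac{1}{2}\langle x,\nn\rangle$ under a normal variation $\partial_s x=f\nn$ yields $\partial_s|_{s=0}\phi=-Lf$; for our inward variation $(f=-u)$ this becomes $\partial_s|_{s=0}\phi=Lu>0$, and since $\phi\equiv 0$ on $\Sigma$ we obtain $\phi>0$ on $\Gamma_s$ for all sufficiently small $s>0$, which is (3).

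For (1), the second variation of $F$ at $\Sigma$ in the direction $u\nn$ equals
\[ F''(u\nn)=-(4\pi)^{-n/2}\int_\Sigma u\,(Lu)\,\e^{-|x|^2/4}=-\lambda_1\,(4\pi)^{-n/2}\int_\Sigma u^2\,\e^{-|x|^2/4}<0. \]
By \cite{CM1}, $(x_0,t_0)\mapsto F_{x_0,t_0}(\Sigma)$ has a strict, non-degenerate local maximum at $(0,1)$, so for small $s$ the supremum defining $\lambda(\Gamma_s)$ is attained at some $(x_0^*(s),t_0^*(s))$ near $(0,1)$. Crucially, $\lambda_1>1$ differs from both $1$ (the eigenvalue of $H$) and $\tfrac{1}{2}$ (the eigenvalue of the translation functions $\langle y,\nn\rangle$), so $u$ is weighted-$L^2$ orthogonal to all trivial variations. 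This orthogonality kills the mixed derivatives $\partial_s\partial_{x_0}$ and $\partial_s\partial_{t_0}$ of $F_{x_0,t_0}(\Gamma_s)$ at $(0,0,1)$, and the envelope theorem then gives $\lambda''(\Gamma_s)|_{s=0}=F''(u\nn)<0$, hence $\lambda(\Gamma_s)<\lambda(\Sigma)$ for small $s>0$. The main obstacle is precisely this step: since entropy is a supremum, the optimal center-scale pair may drift under perturbation, and a general perturbation lowering $F$ could still raise $\lambda$. The orthogonality just described, together with the strict maximum of $F_{x_0,t_0}(\Sigma)$ at $(0,1)$, is what controls this drift and forces the entropy drop to match the drop in $F$, consistent with the entropy-stability framework of \cite{CM1}.
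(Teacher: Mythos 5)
Your proof is correct and follows the same strategy as the paper: perturb $\Sigma$ inward along the positive top eigenfunction $u$ of $L$, using $\partial_s\phi=-Lf$ to get the positivity of the rescaled mean curvature and the entropy-stability machinery of \cite{CM1} (orthogonality of $u$ to the translation and dilation eigenfunctions since $\mu>1$, plus the envelope argument) to conclude the entropy drops. The paper simply outsources the eigenvalue bound $\mu>1$ and the entropy-decreasing property to Theorem 4.30, Corollary 5.15, and Theorem 0.15 of \cite{CM1}, whereas you reconstruct the internal logic of those citations, so the content is the same.
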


\begin{proof}
Let $\Sigma_s$ be a one-parameter family of normal graphs given by
\begin{equation}
	\Sigma_s= \{ x + s \, u(x) \, \nn (x) \, | \, x \in \Sigma \} \, ,
\end{equation}
where $\nn(x)$ is the outward unit normal to $\Sigma$.  Equation ($4.10$) in \cite{CM1} computes that
\begin{align}	\label{e:410cm1}
\frac{d}{ds} \, \big|_{s=0} \,  \left( H - \frac{\langle x , \nn \rangle}{2} \right) 
= - L \, u \, .
\end{align}
The following three facts are proved in \cite{CM1}{\footnote{See \cite{CM1} corollary $5.15$ and theorem $4.30$, theorem $0.15$, and   equation ($4.10$), respectively.}}:
\begin{itemize}
\item The lowest eigenfunction $u$ for $L$ is positive and has $L \, u = \mu \, u$ for some $\mu > 1$.
\item There exists $\epsilon > 0$, so that for every $s$ with $0< |s| < \epsilon$ we have $\lambda (\Sigma_s ) < \lambda (\Sigma)$.
Here $\Sigma_s$ is the graph of $s \, u$.
\item With $s$ as above, $\Sigma_s$ has
\begin{align}
	\left( H - \frac{1}{2} \, \langle x , \nn \rangle \right) < 0 {\text{ if }} s > 0 \, , \\
	\left( H - \frac{1}{2} \, \langle x , \nn \rangle \right) > 0 {\text{ if }} s < 0 \, .
\end{align}
\end{itemize}
Thus, we see that for $s \in (-\epsilon , 0)$, $\Gamma = \Sigma_s$ has the three properties.
\end{proof}

\section{Rescaled MCF}

A one-parameter family of hypersurfaces $M_t$ evolves by rescaled MCF if it satisfies
\begin{align}
	\partial_t x = - \left( H - \frac{1}{2} \, \langle x , \nn \rangle \right)  \, \nn \, .
\end{align}
The rescaled MCF is equivalent to MCF, up to rescalings in space and a reparameterization of time.  Namely, if $\Sigma_t$ is a MCF, then 
  $t\to \Sigma_{-\e^{-t}}/\sqrt{\e^{-t}}$ is a solution to the rescaled MCF equation and vice versa. 
Moreover, rescaled MCF is the negative gradient flow for the $F$ functional and self-shrinkers are the fixed points for this flow.

\subsection{Mean convex rescaled MCF}

The next  ingredient is a result from \cite{IW} about ``mean convex rescaled MCF'' where  
\begin{equation}	\label{e:mcv}
	\left( H - \frac{1}{2} \, \langle x , \nn \rangle \right)  \geq 0 \, .
\end{equation}
We will often refer to this quantity as the rescaled mean curvature.

This result is the rescaled analog of that mean convexity is preserved for MCF.

\begin{Lem}	\label{l:mcvx1}
Let $M_t \subset \RR^{n+1}$ be a smooth rescaled MCF of closed hypersurfaces for $t \in [0,T]$. 
\begin{enumerate}
\item If 
\eqr{e:mcv} holds on $M_0$, then it also holds on $M_t$ for every $t \in [0,T]$. 
\item If in addition $\left( H - \frac{1}{2} \, \langle x , \nn \rangle \right) >0$ at least at one point of $M_0$,
then $\left( H - \frac{1}{2} \, \langle x , \nn \rangle \right) >0$ on $M_t$ for all $t>0$.
\end{enumerate}
  In particular, the flow is monotone in that 
$M_t$ is inside $M_s$ for $t > s$.
\end{Lem}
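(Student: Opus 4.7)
Write $\phi := H - \tfrac{1}{2}\langle x,\nn\rangle$ for the rescaled mean curvature. The plan is to derive a linear parabolic evolution equation for $\phi$ under rescaled MCF and then apply the (weak and strong) parabolic maximum principle on the closed hypersurface $M_t$.

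First I would obtain the evolution equation. Equation (4.10) of \cite{CM1}, already used in the proof of Lemma~\ref{l:perturb}, says that under a normal variation $x\mapsto x+su\,\nn$ of a fixed hypersurface,
\begin{equation}
\frac{d}{ds}\Big|_{s=0}\phi \;=\; -L u
\;=\; -\Delta u + \tfrac12\langle x,\nabla u\rangle - |A|^2 u - \tfrac12 u.
\end{equation}
Since rescaled MCF is the purely normal flow $\partial_t x = -\phi\,\nn$, the change of $\phi$ along the flow at a fixed parameter point agrees with the normal variation formula with $u=-\phi$; tangential components of the velocity would not affect $\phi$ as a geometric quantity. This gives
\begin{equation}
\partial_t \phi \;=\; L\phi \;=\; \Delta\phi - \tfrac12\langle x,\nabla\phi\rangle + \bigl(|A|^2+\tfrac12\bigr)\phi,
\end{equation}
a linear, uniformly parabolic PDE on the evolving closed hypersurface, with smooth (in particular bounded) coefficients on $[0,T]$ because $M_t$ is smooth and closed.

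Next I would prove (1) by the weak maximum principle. The zeroth order coefficient $|A|^2+\tfrac12$ is positive, so to absorb it set $\psi := e^{-Ct}\phi$ with $C$ chosen larger than $\sup_{M_t\times[0,T]}(|A|^2+\tfrac12)$. Then
\begin{equation}
\partial_t\psi \;=\; \Delta\psi - \tfrac12\langle x,\nabla\psi\rangle + \bigl(|A|^2+\tfrac12 - C\bigr)\psi,
\end{equation}
with zeroth order coefficient $\leq 0$. The standard parabolic minimum principle on the closed hypersurfaces $M_t$ (with the auxiliary perturbation $\psi+\epsilon t$, sending $\epsilon\downarrow 0$) shows $\min_{M_t}\psi\geq\min_{M_0}\psi\geq 0$, hence $\phi\geq 0$ on $M_t$ for all $t\in[0,T]$. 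For (2), I would apply the strong parabolic maximum principle to $\psi$: since $\psi\geq 0$ everywhere and $\psi>0$ somewhere on $M_0$, and $M_t$ is connected (closed and smoothly evolving from a connected initial surface, or componentwise), $\psi>0$ on $M_t$ for all $t>0$, so $\phi>0$.

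Finally, the monotonicity statement is immediate from (1): because $\partial_t x = -\phi\,\nn$ with $\nn$ the outward normal and $\phi\geq 0$, every point moves inward (non-outward), so $M_t\subset \overline{\Omega_s}$ for $t>s$, where $\Omega_s$ is the region enclosed by $M_s$. I do not foresee a real obstacle; the only subtle point is the sign check in deriving $\partial_t\phi = L\phi$ from (4.10) of \cite{CM1}, but this is a one-line verification given the cited formula, and the rest is textbook parabolic maximum principle on a smooth closed evolving hypersurface, which is legitimate thanks to the smoothness assumption on $M_t$.
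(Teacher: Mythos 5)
Your proposal is correct and follows essentially the same route as the paper: derive $(\partial_t - L)\phi = 0$ from equation (4.10) of \cite{CM1} with $u=-\phi$, then invoke the weak and strong parabolic maximum principles on the evolving closed hypersurface, with monotonicity following immediately from $\phi\geq 0$ and $\nn$ outward. The only difference is that you spell out the standard $e^{-Ct}$ substitution to absorb the positive zeroth-order coefficient, whereas the paper simply cites ``the parabolic maximum principle''; this is a cosmetic rather than substantive difference.
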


The key to proving this is a Simons type equation for rescaled MCF
\begin{align}	\label{e:simonsrmcfH}
\left( \partial_t - L \right) \,   \left( H - \frac{\langle x , \nn \rangle}{2} \right) 
=   0 \, ,
\end{align}
 which  follows from \eqr{e:410cm1} with
  $u = - \left( H - \frac{\langle x , \nn \rangle}{2} \right)  $.

\begin{proof}[Proof of Lemma \ref{l:mcvx1}]
Set $\phi = H - \frac{1}{2} \, \langle x , \nn \rangle$.  Equation \eqr{e:simonsrmcfH} is equivalent to that
\begin{align}	\label{e:starting}
	\left( \partial_t - \Delta \right) \, \phi = \left( |A|^2 + \frac{1}{2} \right) \, \phi 
	- \langle \frac{x}{2} , \nabla \phi \rangle \, .
\end{align}
Since    $\phi (x,0) \geq 0$ by assumption, the parabolic maximum principle gives $\phi (x,t) \geq 0$ for all $t\geq 0$, giving (1).  The parabolic strong maximum principle then gives (2).  The last claim follows immediately since $\nn$ is the outward pointing normal.
\end{proof}

 \section{Simons type identity for the rescaled $A$}  Using covariant derivatives, 
 the operators $\partial_t$ and $L$ can be extended to tensors (see Hamilton, \cite{Ha}; cf. lemma $10.8$ in \cite{CM1}).
 We will next show that the   Simons type identity \eqr{e:simonsrmcfH} is in reality the trace of a tensor equation:

\begin{Lem}	\label{l:simonsA}
If $M_t$ is flowing by rescaled MCF and $
	L  = \Delta - \langle \frac{x}{2} , \nabla \cdot \rangle + |A|^2 + \frac{1}{2} 
$, then  
\begin{align}	\label{e:simonsrmcf}
\left( \partial_t - L \right) \,  A
&=  - A \, , \\
\left( \partial_t - L \right) \,  H
&=  - H \, , \\
\left( \partial_t - L \right) \,  \langle x , \nn \rangle 
&=  - 2 \, H  \, , \\
(\partial_t-L)\, \e^t\left(A-\frac{\langle x,\nn\rangle}{2n}\, \, g \right)&=\frac{\e^t}{n}\,\left(H-\frac{\langle x,\nn\rangle}{2}\right)\,g \, .
\end{align}
We will later use that the first equation is equivalent to that $\left( \partial_t - L \right) \,  \e^t\,A=0$.  
\end{Lem}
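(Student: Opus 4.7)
The plan is to reduce everything to standard Huisken-type evolution formulas combined with Simons' identity and the classical identities for the derivatives of $\langle x,\nn\rangle$ on a hypersurface.

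First I would record the ambient evolution equations under rescaled MCF $\partial_t x = -\phi\,\nn$ (with $\phi = H - \frac{\langle x,\nn\rangle}{2}$): $\partial_t g_{ij} = -2\phi A_{ij}$, $\partial_t A_{ij} = \nabla_i\nabla_j \phi - \phi\, A_i{}^k A_{kj}$, and $\partial_t H = \Delta\phi + |A|^2 \phi$. In parallel, I would record the hypersurface identities $\nabla_i\langle x,\nn\rangle = A_i{}^k \langle x, e_k\rangle$ and
\begin{equation*}
\nabla_i\nabla_j\langle x,\nn\rangle = \langle x, \nabla A_{ij}\rangle + A_{ij} - \langle x,\nn\rangle\, A_i{}^k A_{kj},
\end{equation*}
the latter obtained from the former by applying $\nabla_j$ and invoking Codazzi, together with Simons' identity in $\RR^{n+1}$, $\Delta A_{ij} = \nabla_i\nabla_j H + H A_i{}^k A_{kj} - |A|^2 A_{ij}$.

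For the scalar equation $(\partial_t - L)H = -H$, I would substitute $\phi = H - \tfrac12\langle x,\nn\rangle$ into $\partial_t H$ and use $\Delta\langle x,\nn\rangle = \langle x,\nabla H\rangle + H - |A|^2\langle x,\nn\rangle$ (the trace of the Hessian formula); the $|A|^2\langle x,\nn\rangle$ contributions cancel and one reads off $\partial_t H = LH - H$. The equation for $\langle x,\nn\rangle$ then follows by combining this with the scalar identity $(\partial_t - L)\phi = 0$ already recorded in \eqref{e:simonsrmcfH}: since $\langle x, \nn\rangle = 2(H - \phi)$, we get $(\partial_t - L)\langle x,\nn\rangle = 2\,(\partial_t - L)(H - \phi) = -2H$.

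For the tensorial equation $(\partial_t - L) A = -A$, I would carry out the analogous computation on $\partial_t A_{ij}$: insert the Hessian formula for $\phi$, apply Simons to replace $\nabla_i\nabla_j H$ by $\Delta A_{ij}$, and invoke the tensor extension of $L$ from \cite{Ha} and lemma 10.8 of \cite{CM1}. After careful bookkeeping the cubic-in-$A$ and $\langle x,\nabla A\rangle$ contributions collapse against the lower-order part of $L$, leaving $(\partial_t - L) A_{ij} = -A_{ij}$; the reformulation $(\partial_t - L)(e^t A) = 0$ then follows from the product rule $(\partial_t - L)(e^t X) = e^t[(\partial_t - L)X + X]$. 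The combined equation for $e^t\bigl(A - \frac{\langle x, \nn\rangle}{2n}\,g\bigr)$ is a direct linear combination of the previous two, using $L(f\, g_{ij}) = (Lf)\, g_{ij}$, which holds since $\nabla g = 0$.

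The main obstacle will be the tensorial computation for $A$: unlike the scalar cases, which reduce quickly once the Hessian formula for $\langle x,\nn\rangle$ is in hand, the $A_{ij}$ identity requires tracking sign conventions carefully, applying Codazzi at just the right moment, and verifying that all cubic-in-$A$ and $\langle x,\nabla A\rangle$ terms cancel against the Hamilton-type corrections built into the tensor extension of $L$.
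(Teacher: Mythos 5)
Your proposal is correct and follows essentially the same approach as the paper: compute the evolution of $A$, $H$, and $\langle x , \nn \rangle$ from the Huisken--Polden variation formulas, Simons' identity, and the Hessian identity for $\langle x , \nn \rangle$, then obtain the fourth equation as a linear combination using that $g$ is parallel. The only cosmetic difference is order -- the paper establishes the tensor equation for $A$ first and reads off the $H$ equation by tracing (legitimate since the covariant extension of $\partial_t$ makes $g$ time-parallel), whereas you verify the scalar $H$ equation by an independent direct computation -- but the underlying calculation and the use of $(\partial_t - L)\,\phi = 0$ to pass to $\langle x , \nn \rangle$ are identical.
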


\begin{proof}
Given a hypersurface and an orthonormal frame $\{ e_i \}$, we follow the convention in \cite{CM1} by
setting
\begin{equation}
	a_{ij} = A(e_i , e_j) = \langle \nabla_{e_i} e_j , \nn \rangle \, ,
\end{equation}
so that $H = - \sum_{i=1}^n a_{ii}$.
 For a general hypersurface,   the Laplacian of $A$ is (see, e.g., lemma B.8 in \cite{Si} where the sign convention for the $a_{ij}$'s is reversed)
 \begin{equation}	\label{e:simsim}
 	  \left( \Delta  A \right)_{ij}  = -    |A|^2 \, a_{ij} - H \, a_{ik} \, a_{jk} - H_{ij} \, .
 \end{equation}
 Here $H_{ij}$ is the $ij$ component of the Hessian of $H$.
 
 If a hypersurface evolves by $\partial_t x = f \, \nn$ for a function $f$, then we 
 have the following standard formulas for the variations of the components of the 
 metric $g_{ij}$, the components of the second fundamental form $a_{ij}$, and the unit normal $\nn$
 (see, e.g., lemmas $7.4$ and $7.6$ in \cite{HP})
 \begin{align}
 	\partial_t \, g_{ij} &= - 2 \, f \, a_{ij} \, , \\
	\partial_t \nn &= - \nabla f \, , \\
	\partial_t \, a_{ij} &= f_{ij} - f \, a_{ik} \, a_{kj} \, .
\end{align}
In the rescaled MCF, we have $f=   \langle x , \nn \rangle/2 - H$.

 We will extend $\partial_t$ to tensors by using the covariant derivative $\nabla_{\partial_t}$
  in the metric $g_{ij} \times dt^2$
 (cf. \cite{Ha} or the appendix in \cite{W4}).  
 If we let $e_i$ be the evolving frame on $M_t$ coming from pushing forward the frame on $M_0$, 
 then the   formula for the Christoffel symbols gives  
 \begin{equation}
 	\partial_t e_i = \frac{1}{2} \, g^{jk} (\partial_t \, g_{ij}) \, e_k = - f \, g^{jk} \,  a_{ij} \, e_k 
	  \,  .
 \end{equation}
 Using the Leibniz rule, 
 we have
 \begin{align}
 	\partial_t g_{ij} = \partial_t \, \left( g (e_i , e_j ) \right) = \left( \partial_t g \right) (e_i , e_j) 
	+ g ( \partial_t e_i , e_j) + g (\partial_t e_j , e_i) \, ,
 \end{align}
 so we see that this choice makes $\partial_t g = 0$.
 
 Similarly, working at a point where the $e_i$'s are orthonormal, the Leibniz rule gives that
 \begin{align}
 	\left( \partial_t \, A \right) (e_i , e_j) &= \partial_t a_{ij} - A (\partial_t e_i , e_j) - A (e_i , \partial_t e_j) \notag \\
	&= \left(  f_{ij}  - f \, a_{ik} a_{kj}  \right) - 2 \, \left( - f \, a_{ik} \, a_{kj} \right) 
	= f_{ij} + f \, a_{ik} \, a_{kj} 
	\, .
 \end{align}
 Combining this with \eqr{e:simsim}, we compute the heat operator on $A$
 \begin{align}	\label{e:heatA}
 	\left( \partial_t \, A - \Delta A \right) (e_i , e_j) &=
	f_{ij} + f \, a_{ik} \, a_{kj}  + |A|^2 \, a_{ij} + H \, a_{ik} a_{kj} + H_{ij} \notag \\
	&= \left( f + H \right)_{ij} +   \left( f + H \right) \, a_{ik} \, a_{kj} 
	+ |A|^2 \, a_{ij}  \\
	&= \frac{1}{2} \, \langle x , \nn \rangle_{ij} +  \frac{1}{2} \, \langle x , \nn \rangle \, a_{ik} \, a_{kj} 
	+ |A|^2 \, a_{ij} \, . \notag
 \end{align}
Rewriting this in terms of the $L$ operator (using $x^T$ for the tangential part of $x$) gives
   \begin{align}	\label{e:heatA2}
  	\left( \partial_t \, A - L A \right) (e_i , e_j) &= 
	\left( \partial_t \, A - \Delta A \right) (e_i , e_j)  + \left( \frac{1}{2} \, \nabla_{x^T} A - \frac{1}{2} \, A - |A|^2 \, A\right) (e_i , e_j) \notag \\
		&= \frac{1}{2} \, \left\{ \langle x , \nn \rangle_{ij} +   \langle x , \nn \rangle \, a_{ik} \, a_{kj} 
		+ \left( \nabla_{x^T} A \right)(e_i , e_j) -  a_{ij}
		 \right\}
	  \, .
 \end{align}
 
 The next step is to compute the operators on $\langle x , \nn \rangle$.  To simplify the calculation, let
  $e_i$ be an orthonormal frame and work at a point where  its tangential covariant derivatives vanish.  
 In particular, at this point we have $\nabla_{e_i} e_j = a_{ij} \, \nn$ and $\nabla_{e_i} \nn = - a_{ij} \, e_j$. 
 Using this and the general fact that $\nabla_{e_i} x = e_i$
  (cf. the proof of lemma
  $5.5$ in \cite{CM1}),  we get at this point  
   \begin{align}
 	 \langle x , \nn \rangle_i &= \nabla_{e_i} \langle x , \nn \rangle = \langle e_i , \nn \rangle + 
	 \langle x , \nabla_{e_i} \nn \rangle = 
	- A (x^T , e_i)   \, .
\end{align}
To compute the Hessian, we will differentiate this.    Using first the Leibniz rule, then the fact that 
$\nabla A$ is fully symmetric in all three inputs by the Codazzi equations and the fact that
\begin{equation}
	\langle \nabla_{e_j} x^T , e_k \rangle = \langle \nabla_{e_j} x  , e_k \rangle - \langle \nabla_{e_j} x^{\perp} , e_k \rangle
	= \delta_{jk} + a_{jk} \, \langle x , \nn \rangle \, ,
\end{equation}
we get that
\begin{align}
	 \langle x , \nn \rangle_{ij} &= - \left( \nabla_{e_j} A \right) (x^T , e_i) - A ( \left( \nabla_{e_j} x^T \right)^T , e_i )
	 - A ( x^T , \nabla_{e_j} e_i ) \notag \\
	 &= - \left( \nabla_{x^T} A \right) (e_i , e_j) - A(e_i , e_j) - a_{ij} \, a_{kj} \, \langle x , \nn \rangle \, , \label{e:coda}
 \end{align}
   Using \eqr{e:coda} in \eqr{e:heatA2} gives
 the first claim.  The second claim follows from the first since traces commute with covariant derivatives ($g$ is parallel, even with respect to time).  The third claim follows from the second claim and the Simons equation
 \eqr{e:simonsrmcfH}  for $H - \frac{1}{2} \, 
 \langle x , \nn \rangle$.
 
  For the last claim, we use that $ \left( \partial_t  - L  \right)\, \e^t \, A = 0$ by the first claim 
  and, 
   since the metric is time-parallel, the third claim gives that 
 \begin{align}
 	\e^{-t} \, \left( \partial_t  - L  \right)\,   \left( \e^t \, \langle x , \nn \rangle \, g \right) =
	\langle x , \nn \rangle \, g +  \left( 
	\left( \partial_t  - L  \right)\,   \langle x , \nn \rangle \right) \, g = \left( \langle x , \nn \rangle  - 2 \, H \right) \, g \, .
 \end{align}
 
\end{proof}

\subsection{Bounding $A$}  In this subsection,
 we use the matrix equation in combination with the parabolic maximum principle to bound $A$ for the rescaled MCF by the rescaled mean curvature when the latter is nonnegative.  

To achieve this bound, we will need some simple computations.  First we will need the quotient rule.  Let $f$ be a symmetric $2$-tensor and $h$ a positive function
\begin{align}	\label{e:quotient}
		(\partial_t-\Delta ) \, \frac{f }{h} &= \frac{(\partial_t-\Delta) \, f }{h} - \frac{f (\partial_t-\Delta) \,h}{h^2} 
		+ 2 \, \frac{  \nabla_{\nabla h} f   }{h^2} -2\,  \frac{f \, |\nabla h|^2}{h^3} \\
		&= \frac{(\partial_t-\Delta) \, f }{h} - \frac{f (\partial_t-\Delta) \,h}{h^2} 
		+ \frac{2}{h} \,  \nabla_{ \nabla h} \left( \frac{f}{h} \right)
	\, .\notag
\end{align}
Next let $V$ be a vector field, and set $\cL\,f=\Delta\,f+  \nabla_V  f $; we will also use that
\begin{align}
(\partial_t-\cL)\,|f|^2&=2\, \langle f ,(\partial_t-\cL)\,f \rangle - 2 \, |\nabla f|^2\leq 2\, \langle f ,(\partial_t-\cL)\,f \rangle \, ,\\
(\partial_t-\cL) \,\left| \frac{f }{h} \right|^2&\leq 2\, \langle \frac{f}{h} ,(\partial_t-\cL)\,\frac{f}{h} \rangle \, ,\\
(\partial_t-\cL ) \, \frac{f }{h} &=\frac{(\partial_t-\cL) \, f }{h} - \frac{f (\partial_t-\cL) \,h}{h^2} 
		+ \frac{2}{h} \,  \nabla_{ \nabla h} \left( \frac{f}{h} \right) \, .
\end{align}
In particular, if $L\,f=\cL\,f+K\,f$ for some function $K$, $(\partial_t-L)\,f=0$, and $(\partial_t-L)\,h=0$, then the two previous equations give
\begin{align}
(\partial_t-\cL ) \, \left|\frac{f }{h}\right|^2 &\leq 2 \, \langle \frac{f}{h} ,(\partial_t-\cL)\,\frac{f}{h} \rangle
= \frac{4}{h} \, \langle \nabla_{ \nabla h} \left( \frac{f}{h} \right) , \frac{f}{h} \rangle \notag \\
&= \frac{2}{h} \, \langle \nabla \left|\frac{f}{h}\right|^2, \nabla h \rangle=2\, \langle \nabla \left|\frac{f}{h}\right|^2, \nabla \log h \rangle\, .
\end{align}

Applying the above to $f=\e^t\,A$ and $h=H-\frac{\langle x,\nn\rangle}{2}$ and using Lemma \ref{l:simonsA} yields the following differential inequality for the ratio 
\begin{equation}
B=\frac{f}{h}=\frac{\e^t\,A}{H-\frac{\langle x,\nn\rangle}{2}}\, :
\end{equation}

\begin{Lem}  \label{l:diffineq}
If $M_t\subset \RR^{n+1}$ are hypersurfaces flowing by the rescaled MCF, $H-\frac{\langle x,\nn\rangle}{2}>0$ on the initial hypersurface, and $B$ is as above, then
\begin{align}
(\partial_t-\cL) \, |B|^2\leq 2\,\langle \nabla |B|^2,\nabla \log \left(H-\frac{\langle x,\nn\rangle}{2}\right)\rangle \, .
\end{align}
\end{Lem}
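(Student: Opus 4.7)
The plan is to apply the general scheme set up in the displays immediately preceding the lemma statement, choosing the symmetric $2$-tensor $f = \e^{t}\,A$ and the positive function $h = H - \frac{\langle x,\nn\rangle}{2}$, so that $B = f/h$.

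Three ingredients need to be verified. First, that $h$ stays strictly positive along the flow, so that $B = f/h$ is actually defined; this is conclusion (2) of Lemma \ref{l:mcvx1}, given that by hypothesis $h > 0$ on $M_0$. Second, that $(\partial_t - L)\,f = 0$: this is the final remark of Lemma \ref{l:simonsA}, which records that the tensor identity $(\partial_t - L)A = -A$ is equivalent to $(\partial_t - L)(\e^t A) = 0$. Third, that $(\partial_t - L)\,h = 0$, which is precisely \eqref{e:simonsrmcfH}.

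Given these three facts, the conclusion follows at once from the cancellation observation in the display directly above the lemma: with $L = \cL + K$ for $K = |A|^2 + \tfrac{1}{2}$, the two $K$-contributions in the tensor quotient rule (one from $(\partial_t - \cL)f = Kf$, one from $(\partial_t - \cL)h = Kh$) cancel, leaving $(\partial_t - \cL) B = \frac{2}{h}\,\nabla_{\nabla h} B$. Combining with the Kato-type bound $(\partial_t - \cL)|B|^2 \leq 2\,\langle B,(\partial_t - \cL)B\rangle$ and rewriting $\frac{2}{h}\langle \nabla |B|^2, \nabla h\rangle = 2\langle \nabla|B|^2,\nabla \log h\rangle$ then yields the stated inequality.

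The only place where any care is needed — and the step I would double-check — is that the quotient rule and the Kato-type bound displayed just before the lemma really do pass from a scalar $f$ to a symmetric $2$-tensor $f$ (now with $|\cdot|$ the tensor norm and $\nabla$ the tensor covariant derivative). This is routine because the Levi-Civita connection satisfies $\nabla g = 0$, so the scalar computations transcribe to the tensor setting without change; this is the only nontrivial obstacle in the argument.
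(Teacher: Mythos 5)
Your proof is correct and follows exactly the route the paper takes: apply the tensor quotient rule and Kato-type inequality from the preceding displays to $f = \e^t A$ and $h = H - \tfrac{1}{2}\langle x,\nn\rangle$, using Lemma \ref{l:simonsA} and \eqref{e:simonsrmcfH} for $(\partial_t - L)f = 0$ and $(\partial_t - L)h = 0$, Lemma \ref{l:mcvx1}(2) for positivity of $h$, and the cancellation of the $K = |A|^2 + \tfrac{1}{2}$ terms. Your concluding remark — that the tensor versions of the quotient rule and Kato bound are justified by metric-compatibility of $\nabla$ (and time-parallelism of $g$ for the $\partial_t$ terms) — is exactly the point the paper implicitly relies on.
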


The parabolic maximum principle now implies the following:

\begin{Cor}	\label{c:theabovec}
If $M_t\subset \RR^{n+1}$ are closed hypersurfaces flowing by the rescaled MCF with $H-\frac{\langle x,\nn\rangle}{2}>0$ on the initial hypersurface, then
\begin{equation}
\left| A \right|^2\leq C\, \e^{-2t}\left|H-\frac{\langle x,\nn\rangle}{2}\right|^2\, ,
\end{equation}
for some constant $C$ depending on the initial hypersurface.  
\end{Cor}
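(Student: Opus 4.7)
The plan is to read the Corollary as an almost immediate consequence of Lemma \ref{l:diffineq} via the parabolic maximum principle, once it is verified that the quantity $B = \e^t A / (H - \langle x, \nn\rangle /2 )$ remains a smooth, globally defined tensor along the flow. So the first thing I would establish is the regularity of $B$: the denominator $h := H - \langle x,\nn\rangle/2$ is strictly positive on the closed initial hypersurface $M_0$, and therefore attains a positive minimum there; by Lemma \ref{l:mcvx1}(2), $h > 0$ on $M_t$ for all $t \in [0,T]$, so $B$ is smooth and the scalar $|B|^2$ makes sense on all of the spacetime track of the flow.

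Next I would appeal to the differential inequality from Lemma \ref{l:diffineq}, rewritten as
\begin{equation}
\partial_t |B|^2 \leq \Delta |B|^2 + \langle -\tfrac{x}{2} + 2\,\nabla \log h, \nabla |B|^2 \rangle,
\end{equation}
which is a linear parabolic inequality \emph{without any zeroth-order term}. The coefficients of the first-order drift are smooth on the compact hypersurfaces $M_t$, so the standard parabolic maximum principle on closed manifolds applies: at a spatial maximum of $|B|^2(\cdot,t)$ one has $\nabla |B|^2 = 0$ and $\Delta |B|^2 \leq 0$, hence $\partial_t |B|^2 \leq 0$ there, which implies that $t \mapsto \max_{M_t} |B|^2$ is non-increasing.

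Consequently, for every $t \geq 0$,
\begin{equation}
\max_{M_t} |B|^2 \;\leq\; \max_{M_0} |B|^2 \;=\; \max_{M_0} \frac{|A|^2}{(H-\langle x,\nn\rangle/2)^2} \;=:\; C,
\end{equation}
a finite constant depending only on $M_0$ (since $M_0$ is compact and $h>0$ there). Unwinding the definition of $B$ and multiplying through by $\e^{-2t}\,h^2$ gives the desired estimate
\begin{equation}
|A|^2 \leq C\,\e^{-2t}\left| H - \tfrac{\langle x,\nn\rangle}{2}\right|^2.
\end{equation}

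The only subtle point I expect to encounter is ensuring that the strict positivity $h > 0$ is preserved \emph{uniformly enough} that $B$ and $\nabla \log h$ remain admissible in the maximum principle on each time slice; this is exactly what Lemma \ref{l:mcvx1} delivers, so the argument is essentially a one-line application of the maximum principle to the ratio estimate from Lemma \ref{l:diffineq}, with no further analytic difficulty.
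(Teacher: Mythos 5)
Your proof is correct and follows exactly the paper's one-line argument: apply the parabolic maximum principle to the gradient-flow-type differential inequality for $|B|^2$ from Lemma \ref{l:diffineq}, using Lemma \ref{l:mcvx1} to guarantee the denominator $h = H - \tfrac{1}{2}\langle x,\nn\rangle$ stays positive so that $B$ is globally defined. Your elaboration of the constant $C = \max_{M_0} |A|^2/h^2$ and the unwinding of $B$ is precisely what the paper leaves implicit.
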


\begin{proof}
This follows from the parabolic maximum principle and Lemma \ref{l:diffineq}.  
\end{proof}

\subsection{Curvature bounds for blow ups}

The next lemma establishes that $H$ bounds $|A|$ on the regular part of any multiplicity one tangent flow when the initial hypersurface is closed and has positive rescaled mean curvature.  In particular, any such tangent flow has $H \geq 0$.

\begin{Lem}	\label{l:curvbd}
Let $M_t\subset \RR^{n+1}$ be closed hypersurfaces flowing by the rescaled MCF with $H-\frac{\langle x,\nn\rangle}{2}\geq 0$ on the initial hypersurface $M_0$.  There exists $C>0$ so that if  $T_{t}$ is a tangent flow at the first singular time and $T_{-1}$ is multiplicity one, 
then:
\begin{itemize}
\item  $|A| \leq C \, H$  on the regular set $\reg (T_{-1})$.
\end{itemize}
\end{Lem}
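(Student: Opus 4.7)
The plan is to transfer the pointwise estimate of Corollary \ref{c:theabovec} from the rescaled flow $M_s$ down to the corresponding (unrescaled) MCF $\Sigma_t := \sqrt{-t}\, M_{-\log(-t)}$ and then to the tangent flow at its first singular point.  Let $C_0$ denote the constant of Corollary \ref{c:theabovec} and write $t_{\mathrm{sing}} \in (-1,0]$ for the first singular time of $\Sigma$.

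First, using the scalings $|A|_{M_s}(y) = \sqrt{-t}\,|A|_{\Sigma_t}(x)$, $H_{M_s}(y) = \sqrt{-t}\,H_{\Sigma_t}(x)$, and $\langle y, \nn\rangle = \langle x, \nn\rangle / \sqrt{-t}$ (for $y = x/\sqrt{-t}$, $t = -e^{-s}$), Corollary \ref{c:theabovec} rewrites on the unrescaled flow as
\[
|A|_{\Sigma_t}(x) \;\leq\; \sqrt{C_0}\,\Bigl[(-t)\, H_{\Sigma_t}(x) \;-\; \tfrac{1}{2}\langle x, \nn\rangle\Bigr],
\]
the bracket being nonnegative because $H_{M_s} - \langle y,\nn\rangle/2 \geq 0$ by Lemma \ref{l:mcvx1}.

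Next, fix a singular point $(x_0, t_{\mathrm{sing}})$ of $\Sigma$ and consider the parabolic rescalings $\widetilde{\Sigma}^{\lambda}_{\tau} := \lambda\bigl(\Sigma_{t_{\mathrm{sing}}+\tau/\lambda^2}-x_0\bigr)$ defining a tangent flow $T_\tau = \lim_{\lambda_j \to \infty} \widetilde{\Sigma}^{\lambda_j}_\tau$.  For $y \in \widetilde{\Sigma}^\lambda_\tau$ with $x = x_0 + y/\lambda$ and $t = t_{\mathrm{sing}}+\tau/\lambda^2$, the inequality above rescales cleanly to
\[
|A|_{\widetilde{\Sigma}^\lambda}(y,\tau) \;\leq\; \sqrt{C_0}\,\Bigl[-\bigl(t_{\mathrm{sing}} + \tfrac{\tau}{\lambda^2}\bigr) H_{\widetilde{\Sigma}^\lambda}(y,\tau) \;-\; \tfrac{\langle x_0, \nn\rangle}{2\lambda} \;-\; \tfrac{\langle y, \nn\rangle}{2\lambda^2}\Bigr].
\]
Since $T$ is multiplicity one, Brakke-type local regularity upgrades the weak convergence $\widetilde{\Sigma}^{\lambda_j} \to T$ to smooth convergence on compact subsets of $\Reg(T_{-1})$, so $|A|$ and $H$ pass to pointwise limits there.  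Setting $\tau = -1$ and sending $\lambda_j \to \infty$ kills the last two error terms and yields
\[
|A|_{T_{-1}}(y) \;\leq\; \sqrt{C_0}\,(-t_{\mathrm{sing}})\, H_{T_{-1}}(y) \qquad \text{on } \Reg(T_{-1}).
\]
Taking $C := \sqrt{C_0}\,(-t_{\mathrm{sing}})$ completes the proof; $H_{T_{-1}}\ge 0$ is automatic since $|A|_{T_{-1}}\ge 0$ and $C>0$.

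The main obstacle is the smooth-convergence step: one must apply a Brakke-type $\varepsilon$-regularity theorem (or White's refinement) to the multiplicity-one tangent-flow convergence so that $|A|$ and $H$ pass to pointwise limits on the regular part, rather than only in some weaker varifold sense.  In the degenerate case $t_{\mathrm{sing}}=0$ (when $M_s$ remains smooth for all $s\ge 0$) the constant collapses to $C=0$ and the argument forces $|A|_{T_{-1}}\equiv 0$ on $\Reg(T_{-1})$, which is trivially consistent with the claimed $|A|\le C H$.
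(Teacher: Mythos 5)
Your proof is correct and follows essentially the same strategy as the paper: apply Corollary \ref{c:theabovec}, observe that under the blow-up the additive correction terms scale away, and pass to a smooth limit on $\Reg(T_{-1})$ via White's Brakke regularity theorem. The only cosmetic difference is that the paper works directly with the rescaled flow $M_t$ (using $e^{-t}\le 1$ for $t\ge 0$ and the uniform bound on $|\langle x,\nn\rangle|$ from the nested property to reduce to $|A|\le C_0 H + C_0 C_1$, with the additive constant scaling to zero in the spatial blow-up), whereas you convert to the unrescaled MCF $\Sigma_t$ before blowing up. One small remark: your caveat about the degenerate case $t_{\mathrm{sing}}=0$ is vacuous here, since the lemma presupposes that the rescaled flow has a first singular time $s_{\mathrm{sing}}<\infty$, which forces $t_{\mathrm{sing}}=-e^{-s_{\mathrm{sing}}}<0$ strictly.
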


\begin{proof}
Let $\tau$ be the first singular time and  $y \in M_{\tau}$ the singular point.   
 
By Lemma \ref{l:mcvx1}, every $M_t$ has
 	$H - \frac{1}{2} \, \langle x , \nn \rangle \geq 0$,
so  the flow is nested and  
\begin{equation}	\label{e:C1}
	\frac{1}{2} \, \left| \langle x , \nn \rangle \right| \leq \frac{|x|}{2} \leq C_1 \equiv \frac{1}{2} \,  \max_{ M_0} |x| \, .
\end{equation}
Corollary \ref{c:theabovec} gives a constant $C_0>0$ depending on the initial hypersurface $M_0$
so that
\begin{equation}	\label{e:fromcor}
\left| A \right| \leq C_0\, \e^{-t}\left( H-\frac{\langle x,\nn\rangle}{2}\right)
\leq C_0 \, \left( H-\frac{\langle x,\nn\rangle}{2}\right) \leq
C_0 \, H + C_0 \, C_1 \, .
\end{equation}

Fix a compact subset $\Omega \subset \reg (T_{-1})$.  Since $\Omega$ is smooth and multiplicity one, White's version of Brakke's regularity theorem, \cite{W2} (cf. Allard's theorem; see \cite{Al} or \cite{Si}),
 gives a sequence $h_i \to 0$ so that (a subset of)
\begin{equation}
	\Sigma_i \equiv \frac{1}{h_i} \, \left( M_{\tau - h_i^2} - y \right)
\end{equation}
smoothly converges to $\Omega$.  Let $A_i$ and $H_i$ be the second fundamental form and mean curvature, respectively, of $\Sigma_i$.  It follows from \eqr{e:fromcor} that
\begin{equation}
	 \left| A_i \right| = h_i \, |A| \leq h_i \, \left( C_0 \, H + C_0 \, C_1\right) = C_0  \, H_i + h_i \, C_0 \, C_1 \, .
\end{equation}
Since we have smooth convergence to $\Omega$ and $h_i \to 0$, we can pass to limits to get that
\begin{equation}
	\left| A_{\Omega} \right| \leq C_0 \, H_{\Omega} \, .
\end{equation}
The lemma follows since $\Omega$ is arbitrary. 
\end{proof}

\begin{Rem}
A similar statement holds for blow ups at the first singular time.
\end{Rem}

\section{Finite time blow up for rescaled MCF}

The next result shows that there is finite time blow up for solutions of the rescaled MCF when the initial closed 
hypersurface has a strict positive lower bound for $  H - \frac{1}{2} \, \langle x , \nn \rangle $.

\begin{Lem}	\label{l:sing}
Given $c> 0$, there exists $T_c$ so that
if $M_0$ is a closed hypersurface in $\RR^{n+1}$ with 
$  H - \frac{1}{2} \, \langle x , \nn \rangle   \geq c$, then the rescaled MCF $M_t$ hits a singularity for $t \leq T_c$.
\end{Lem}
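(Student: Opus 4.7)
The plan is to exploit the pointwise bound $\phi := H - \tfrac{1}{2}\langle x, \nn\rangle \geq c$ to force the enclosed volume of $M_t$ to shrink to zero within a time that depends only on $c$ and $n$; since no smooth closed hypersurface encloses zero volume, the flow must become singular beforehand.

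First, I would use the hypothesis $\phi \geq c$ to bound $M_0$ in a Euclidean ball of radius depending only on $c$ and $n$. At a point $p$ achieving $R_0 := \max_{M_0}|x|$, the surface $M_0$ lies in $\overline{B_{R_0}}$ and is tangent there to $\partial B_{R_0}$ with the same outward unit normal; comparing second fundamental forms yields $H(p) \leq n/R_0$, and since $\nn(p)$ is radial we have $\langle x, \nn\rangle(p) = R_0$. The assumption $\phi(p) \geq c$ then gives $R_0^2 + 2cR_0 \leq 2n$, so $R_0 \leq -c + \sqrt{c^2 + 2n}$. By Lemma \ref{l:mcvx1}, the flow is nested inward, so $M_t \subset B_{R_0}$ and the enclosed volume $V(t) := \mathrm{Vol}(\Omega_t)$ satisfies $V(0) \leq \omega_{n+1} R_0^{n+1}$, all with constants depending only on $c$ and $n$.

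Next, I would promote the positivity of $\phi$ to exponential growth by applying the parabolic maximum principle to the Simons-type identity \eqref{e:simonsrmcfH}, which in divergence form reads $(\partial_t - \Delta)\phi = (|A|^2 + \tfrac{1}{2})\phi - \langle \tfrac{x}{2}, \nabla\phi\rangle$. At a spatial minimum of $\phi$, the gradient vanishes and the Laplacian is nonnegative, so $\partial_t \phi_{\min} \geq \tfrac{1}{2}\phi_{\min}$, giving $\phi(\cdot, t) \geq c\, e^{t/2}$ on every $M_t$.

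Finally, I would compute the evolution of enclosed volume. Since $\partial_t x = -\phi\,\nn$,
\begin{equation*}
V'(t) = -\int_{M_t} \phi \leq -c\, e^{t/2}\, \mathrm{Area}(M_t) \leq -c\, c_n\, e^{t/2}\, V(t)^{n/(n+1)},
\end{equation*}
where $c_n$ is the Euclidean isoperimetric constant. Dividing by $V^{n/(n+1)}$ and integrating yields
\begin{equation*}
V(t)^{1/(n+1)} \leq V(0)^{1/(n+1)} - \tfrac{2 c\, c_n}{n+1}\bigl(e^{t/2} - 1\bigr).
\end{equation*}
The right-hand side must remain nonnegative as long as $M_t$ is a smooth closed hypersurface, so the flow must become singular no later than the explicit time $T_c$ at which the right-hand side vanishes; using the $c,n$-bound on $V(0)$, $T_c$ depends only on $c$ and $n$.

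The main obstacle I anticipate is the first step: securing a ball-containment radius for $M_0$ that depends only on $c$ and $n$ with no a priori diameter assumption. Once this uniform $R_0$ is in hand, the inward nesting from Lemma \ref{l:mcvx1}, the maximum principle for $\phi$, and the isoperimetric inequality combine essentially automatically.
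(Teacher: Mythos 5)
Your overall strategy---exponential growth of $\phi := H - \tfrac12\langle x,\nn\rangle$ followed by forced collapse of the enclosed volume via the isoperimetric inequality---is sound and is a genuinely different endgame from the paper's. The paper also begins with $\phi(\cdot,t)\ge c\,e^{t/2}$, obtained from \eqref{e:starting} and the maximum principle applied to $e^{-t/2}\phi$; it then uses the fact that nesting bounds $|\langle x,\nn\rangle|$ uniformly to conclude $H\ge\phi/2$ for $t\ge T_1$, hence $n|A|^2\ge H^2\ge\phi^2/4$, and finishes with a Riccati comparison $m'(t)\ge m(t)^3/(4n)$ for $m(t)=\min_{M_t}\phi$, forcing $|A|$ to blow up. Your replacement of that last stage by $V'(t)=-\int_{M_t}\phi \le -c\,c_n\,e^{t/2}\,V(t)^{n/(n+1)}$, integrated against the isoperimetric inequality, is clean, avoids any mention of $|A|$, and closes the argument just as effectively.

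However, your first step has a sign error that cannot be repaired. At the point $p$ achieving $R_0=\max_{M_0}|x|$, the hypersurface touches $\partial B_{R_0}$ \emph{from inside}, so the comparison goes the other way: $M_0$ is \emph{at least} as curved as the enclosing sphere there, giving $H(p)\ge n/R_0$, not $H(p)\le n/R_0$. Equivalently, $\Delta_{M_0}|x|^2 = 2n - 2H\,\langle x,\nn\rangle\le 0$ at the interior maximum and $\langle x,\nn\rangle(p)=R_0$, so $H(p)\ge n/R_0$. Hence $\phi(p)\ge c$ gives no upper bound on $R_0$, and in fact no $c$-dependent bound exists: a round sphere of small radius $r$ centered at distance $d=2n/r-r-2c$ from the origin satisfies $\phi\ge c$ everywhere, yet $\max_{M_0}|x|\approx 2n/r$ is arbitrarily large. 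Consequently the $T_c$ your integration produces depends on $V(0)^{1/(n+1)}$, hence on $M_0$, not only on $c$ and $n$. (For what it is worth, the paper's own written proof has the same feature: its constant $C=\tfrac12\max_{M_0}|x|$ depends on $M_0$, and only the fact of finite-time singularity, not uniformity of $T_c$, is used in Proposition~\ref{p:rigid}.) After deleting the incorrect ball-containment step, your argument does establish finite-time singularity---which is what the paper actually needs---by a valid and arguably slicker route than the one taken there.
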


\begin{proof}
Set $\phi =  H - \frac{1}{2} \, \langle x , \nn \rangle  $.    
Let $m(t)$ be the minimum of $\phi$ at time $t$.  By \eqr{e:starting}, we have
\begin{equation}
	\left( \partial_t - \Delta \right) \, \e^{- \frac{t}{2} } \, \phi \geq - \langle \frac{x}{2} , \nabla \e^{- \frac{t}{2} } \, \phi \rangle \, , 
\end{equation}
so the parabolic maximum principle gives that the minimum of $\e^{- \frac{t}{2} } \, \phi$ is non-decreasing in time.  This 
  gives exponential growth of $m(t)$, i.e., 
\begin{equation}	\label{e:expgrowth}
	m(t) \geq \e^{ \frac{t}{2} } \, m(0) \geq  \e^{ \frac{t}{2} } \,  c > 0 \, .
\end{equation}
   Since $M_t$ lies in a bounded set by Lemma \ref{l:mcvx1} and $|\nn|=1$, we have a constant $C$ so that
   $\left| \frac{1}{2} \, \langle x , \nn \rangle \right| \leq |x| / 2 \leq C$ for all $t \geq 0$.  
   Combining this with \eqr{e:expgrowth}, we can choose $T_1 > 0$ so that
   if $t \geq T_1$, then 
  $
   	H \geq \frac{\phi}{2}$.  
	In particular, when $t \geq T_1$, this means that
\begin{equation}
	n \, |A|^2 \geq H^2 \geq  \frac{ \phi^2}{4} \, .
\end{equation}
Using this in \eqr{e:starting}, we get  for $t \geq T_1$ that
\begin{equation}
	\left( \partial_t - \Delta \right) \, \phi = \left( |A|^2 + \frac{1}{2} \right) \, \phi - \frac{1}{2} \, \langle x , \nabla \phi \rangle
	\geq \frac{1}{4n} \, \phi^3 - \frac{1}{2} \, \langle x , \nabla \phi \rangle \, .
\end{equation}
Using this, the parabolic maximum principle gives 
the differential inequality (for $t \geq T_1$)
\begin{equation}
	m' (t) \geq \frac{1}{4n} \, m^3(t) \, .
\end{equation}
Comparing this with 
  the  ODE $f'(t) \geq \frac{1}{4n} \, f^3(t)$, this gives finite time blow up for $\phi$ which implies the finite time blow up for $|A|$, giving the desired singularity.
\end{proof}

\section{Tangent cones to self-shrinkers}

In this section, we will  study  two types of $n$-dimensional rectifiable integral varifolds.  The first are weak solutions of the self-shrinker equation \eqr{e:ss}; 
these are  critical points for the $F$ functional and are called $F$-stationary.   The second are stationary with respect to the Euclidean volume and will simply be called stationary;  these    arise as blow ups of the first.

The singular set and regular set of a rectifiable varifold $\Sigma$ are denoted by $\sing (\Sigma)$ and $\reg (\Sigma)$, respectively.

The main result of this section is the following:

\begin{Pro}	\label{p:tc2}
If $\Sigma \subset \RR^{n+1}$ is a $F$-stationary rectifiable varifold, $\lambda (\Sigma)<\frac{3}{2}$, and
 there is a constant $C > 0$ so that
\begin{equation}	\label{e:constC}
	|A| \leq C \, H {\text{ on the regular set }} \reg (\Sigma) \, ,
\end{equation}
then $\Sigma$ is smooth.
\end{Pro}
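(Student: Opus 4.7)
The plan is to argue by contradiction: suppose some $p \in \sing(\Sigma)$ and analyze a tangent cone $\cC$ of $\Sigma$ at $p$, produced as a subsequential varifold limit of the dilations $\Sigma_i = r_i(\Sigma - p)$ with $r_i \to \infty$. The generalized mean curvature of $\Sigma$ equals $\langle x, \nn\rangle/2$, so under these dilations the mean curvatures $H_{\Sigma_i}$ tend to zero locally uniformly. Hence $\cC$ is a stationary integral varifold, and by the usual monotonicity argument it is a cone with vertex at the origin. Huisken's monotonicity (valid for $F$-stationary varifolds) identifies the Gaussian density of $\Sigma$ at $p$ with $F(\cC) = \Theta(\cC, 0)$, and bounds both by $\lambda(\Sigma) < \frac{3}{2}$.

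The next step would be to exploit the scale-invariant hypothesis $|A| \leq C\, H$ to force $\cC$ to be flat. At any $q \in \reg(\cC)$ the tangent plane has multiplicity one, so Allard's regularity theorem, applied to the sequence $\Sigma_i$ (whose generalized mean curvatures vanish in the limit on a neighborhood of $q$), shows that the convergence $\Sigma_i \to \cC$ is smooth near $q$. The inequality $|A| \leq C\, H$ is scale invariant; passing to the smooth limit gives $|A_\cC| \leq C\, H_\cC$ on $\reg(\cC)$. But $\cC$ is stationary, so $H_\cC \equiv 0$, and therefore $A_\cC \equiv 0$ on the regular set. Thus $\reg(\cC)$ is totally geodesic, and since $\cC$ is a cone, each component of $\reg(\cC)$ is an open subset of a hyperplane through the origin. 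The mass bound coming from $\Theta(\cC, 0) < \frac{3}{2}$ forces finitely many such components, and the constancy theorem gives that $\cC = \sum_j m_j P_j$ for distinct hyperplanes $P_j$ through the origin and positive integer multiplicities $m_j$.

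The density at the origin is then $\sum_j m_j$, a positive integer, and the bound $\sum_j m_j < \frac{3}{2}$ forces $\cC$ to be a single hyperplane with multiplicity one. Applying Allard's regularity theorem to $\Sigma$ itself at $p$, using that its generalized mean curvature $\langle x, \nn\rangle/2$ is bounded near $p$, one concludes that $\Sigma$ is a $C^{1,\alpha}$ graph in a neighborhood of $p$, and standard elliptic bootstrapping for the self-shrinker equation \eqr{e:ss} upgrades this to smoothness, contradicting $p \in \sing(\Sigma)$.

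The main technical hurdle will be justifying multiplicity-one smooth convergence on $\reg(\cC)$ so that the scale-invariant inequality $|A| \leq C\, H$ can actually be transferred to the tangent cone. Once that transfer is in place, stationarity collapses $A_\cC$ to zero, and the hypothesis $\lambda(\Sigma) < \frac{3}{2}$ is precisely what rules out multi-plane or higher-multiplicity limits via the integer-valued density at the vertex.
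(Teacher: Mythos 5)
Your blow-up setup and the transfer of the scale-invariant inequality $|A| \le C\,H$ to a tangent cone $\cC$ via multiplicity-one Allard smooth convergence are correct, and they match the paper's Lemma \ref{l:tc1} and Corollary \ref{c:tc2}. But there is a genuine gap at the step where you pass from ``$|A|\equiv 0$ on $\reg(\cC)$'' to ``$\cC=\sum_j m_j P_j$ with full hyperplanes $P_j$.''

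The fact that each connected component of $\reg(\cC)$ lies in a hyperplane through the origin does not imply that $\cC$ is supported on a union of full hyperplanes, because the components may be proper subsets (for instance, open half-hyperplanes). The model obstruction is the cone consisting of three half-hyperplanes meeting at $2\pi/3$ dihedral angles along a common $(n-1)$-plane through $0$: this is a stationary integral cone with $|A|\equiv 0$ on its regular part, yet it is not a sum of hyperplanes, and the constancy theorem gives nothing because the support is not a smooth connected manifold. Its vertex density is exactly $\tfrac{3}{2}$, so it is indeed excluded by $\Theta(\cC,0)<\tfrac32$, but your argument invokes that bound only after having already (incorrectly) reduced to whole hyperplanes. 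What is missing is a proof that $\sing(\cC)\subset\{0\}$; once you know that, Lemma \ref{l:elGa} of the paper upgrades a component of $\reg(\cC)$ to a full hyperplane, and then the density bound finishes.

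The paper closes this gap with a Federer dimension reduction (Lemma \ref{l:dimred}): a singular point $y\neq 0$ of a stationary cone yields a tangent cone splitting off the $\RR_y$ factor, with the cross-section again stationary, conical, of smaller dimension, with $|A|\equiv 0$ on its regular part and entropy (hence vertex density) still $<\tfrac32$. Iterating $n-1$ times produces a non-flat one-dimensional stationary cone in $\RR^2$ with density $<\tfrac32$, which is impossible since any such cone with at least two rays that is not a line has density $\geq \tfrac32$. You need this inductive descent (or some substitute argument controlling the singular set away from the vertex); the leap to the constancy theorem does not replace it.
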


We will use a blow up analysis, where we analyze the tangent cones, to prove the proposition.   It follows from \cite{Al} (cf.
section $42$ in \cite{Si})
that an integral $n$-rectifiable varifold has stationary integral rectifiable tangent cones as long as the generalized mean curvature $H$ is locally in $L^p$ for some $p> n$.  This is trivially satisfied for stationary varifolds where $H = 0$ and  for $F$-stationary varifolds where $H$ is locally in $L_{\infty}$.

\begin{Lem}	\label{l:tc1}
Suppose that $\Sigma \subset \RR^{n+1}$ is a $F$-stationary  rectifiable varifold satisfying \eqr{e:constC}.
If $\Gamma$ is any multiplicity one blow up of $\Sigma$, then $\Gamma$ is   stationary    and
$|A| \equiv 0$ on $\reg (\Gamma)$.
\end{Lem}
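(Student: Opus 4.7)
The plan is to analyze a tangent cone (``multiplicity one blow up'') $\Gamma$ of $\Sigma$ at some $x_0$ as a varifold limit of dilations $\Sigma_j = \lambda_j (\Sigma - x_0)$ with $\lambda_j \to \infty$, and to show directly that (i) the limit is stationary and (ii) the pointwise inequality $|A|\le C H$ survives passage to the limit on the smooth part. These two facts together force $|A_\Gamma| \equiv 0$ on $\reg(\Gamma)$.

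First, to see that $\Gamma$ is stationary: since $\Sigma$ is $F$-stationary, its generalized mean curvature satisfies $H_\Sigma = \langle x,\nn\rangle/2$ and is therefore locally bounded on the support of $\Sigma$. Mean curvature scales as $H_{\Sigma_j} = \lambda_j^{-1}\, H_\Sigma(x_0 + \lambda_j^{-1}\cdot)$, so on any fixed compact set $K \subset \RR^{n+1}$ we have $\sup_K |H_{\Sigma_j}| \to 0$. Consequently the generalized mean curvature of the varifold limit $\Gamma$ vanishes, i.e. $\Gamma$ is stationary. Next, since $\Gamma$ is assumed to be of multiplicity one, Allard's regularity theorem (see \cite{Al}, or section $23$ in \cite{Si}; cf.\ the Brakke-type regularity in \cite{W2}) applies to the sequence $\Sigma_j$, whose generalized mean curvatures tend uniformly to $0$ in $L^\infty_{\mathrm{loc}}$. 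It follows that near every point of $\reg(\Gamma)$ the convergence $\Sigma_j \to \Gamma$ is smooth on some open neighborhood, and in particular the second fundamental forms $A_{\Sigma_j}$ and mean curvatures $H_{\Sigma_j}$ converge smoothly to $A_\Gamma$ and $H_\Gamma$ on compact subsets of $\reg(\Gamma)$.

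Finally, the pointwise bound $|A|\leq C\,H$ is invariant under dilation, so it holds on $\reg(\Sigma_j)$ for every $j$. Passing to the smooth limit on a compact subset of $\reg(\Gamma)$ gives $|A_\Gamma| \leq C\, H_\Gamma$ there, and since $H_\Gamma \equiv 0$ by the first step, $|A_\Gamma| \equiv 0$ on the regular set, as claimed. The main subtlety is the regularity input in the second step: one needs that a multiplicity one varifold limit of integral varifolds with uniformly small (in fact vanishing in the limit) generalized mean curvature is locally smooth near regular points with smooth convergence of the approximations. This is the classical content of Allard's theorem, and it is precisely what allows the scale-invariant inequality $|A|\le C H$ to be promoted from $\reg(\Sigma_j)$ to $\reg(\Gamma)$.
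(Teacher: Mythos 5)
Your argument is essentially the same as the paper's: both observe that $H_\Sigma = \langle x,\nn\rangle/2$ is locally bounded so the generalized mean curvature of the rescalings vanishes in the limit (hence $\Gamma$ is stationary), invoke Allard's theorem together with the multiplicity one hypothesis to upgrade varifold convergence to smooth convergence on $\reg(\Gamma)$, and then pass the scale-invariant bound $|A|\le C\,H$ to the limit, where $H_\Gamma\equiv 0$ forces $|A_\Gamma|\equiv 0$. The only cosmetic difference is that the paper writes the blow up with a possibly moving base point $y_i\to y$ while you fix the center; this does not affect the reasoning.
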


\begin{proof}
By definition, there are sequences $s_i \to 0$ and   $y_i \to y \in \RR^{n+1}$ so that
\begin{equation}
	\Sigma_i \equiv \frac{1}{s_i} \, \left( \Sigma - y_i \right)
\end{equation}
converges with multiplicity one to $\Gamma$.   Hence, the regular part of each $\Sigma_i$ satisfies
$
	|A_i| \leq C \, H_i 
$
with the  constant $C$ from \eqr{e:constC}
since this inequality is scale invariant.
Since  $H = \frac{1}{2} \, \langle x , \nn \rangle$ is locally bounded on $\Sigma$,  
the limit $\Gamma$ is stationary.

Since $\Gamma$ is multiplicity one, Allard's theorem{\footnote{Since the sequence of rescalings
has  locally bounded $H$, Allard gives uniform local $C^{1,\alpha}$ estimate graphical estimates.  Elliptic theory then gives uniform estimates on higher derivatives and, thus, smooth convergence.}}
   (see \cite{Al} or \cite{Si}) implies that the $\Sigma_i$'s converge smoothly on the 
regular set $\reg (\Gamma)$ and, thus,  
\begin{equation}
	|A_{\Gamma}| \leq C \, H_{\Gamma} {\text{ on }} \reg (\Gamma) \, .
\end{equation}
Since $\Gamma$ is stationary, we have $H=0$ on $\reg (\Gamma)$ and the lemma follows.
\end{proof}

Tangent cones are limits of rescalings about a fixed point.  An iterated tangent cone is  a tangent cone to a tangent cone to a tangent cone$\dots$  (with  finitely many iterations).

\begin{Cor}	\label{c:tc2}
Let $\Sigma \subset \RR^{n+1}$ be a $F$-stationary  rectifiable varifold 
satisfying \eqr{e:constC}.
 If $\lambda (\Sigma) < 2$, then every iterated tangent cone $\Gamma$ is stationary, has $\lambda (\Gamma)\leq \lambda (\Sigma)$, has multiplicity one, and satisfies 
$|A| \equiv 0$ on $\reg (\Gamma)$.
\end{Cor}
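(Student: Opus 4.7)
The plan is to reduce to Lemma \ref{l:tc1} by induction on the number of iterations, propagating at each stage four properties of the blow up: stationarity, multiplicity one, entropy strictly less than $2$, and $|A| \equiv 0$ on the regular set. The two ingredients required beyond Lemma \ref{l:tc1} itself are an entropy monotonicity statement for blow ups and an integrality argument giving multiplicity one.

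First I would show that $\lambda(\Gamma) \leq \lambda(\Sigma)$ whenever $\Gamma$ is a blow up of $\Sigma$. Since entropy is invariant under translations and dilations, every rescaling $\Sigma_i = (\Sigma - y_i)/s_i$ satisfies $\lambda(\Sigma_i) = \lambda(\Sigma)$. The Gaussian weight is smooth with rapid decay, and the uniform mass bounds on arbitrarily large balls implied by the entropy bound let each $F_{x_0, t_0}$ pass to the limit under varifold convergence $\Sigma_i \to \Gamma$, so $F_{x_0, t_0}(\Gamma) \leq \lambda(\Sigma)$. Taking the supremum gives $\lambda(\Gamma) \leq \lambda(\Sigma)$, and by induction the bound $\lambda < 2$ is preserved at every further iteration.

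Next I would establish multiplicity one. Any tangent cone to an $F$-stationary (or stationary) varifold is itself a stationary cone, since $H$ on the parent is locally bounded -- equal to $\langle x, \nn\rangle/2$ in the $F$-stationary case, and zero otherwise -- so the rescaled mean curvature vanishes in the limit. For a stationary cone $\Gamma$ through the origin, scale invariance gives $F_{0, t_0}(\Gamma) \equiv F_{0, 1}(\Gamma)$, and letting $t_0 \to 0$ identifies this common value with the Euclidean density $\Theta(\Gamma, 0)$; thus $\Theta(\Gamma, 0) \leq \lambda(\Gamma) < 2$. The usual monotonicity formula for stationary varifolds gives $\Theta(\Gamma, y) \leq \Theta(\Gamma, 0) < 2$ for every $y \in \Gamma$, and integrality of the multiplicity at each regular point then forces it to equal one.

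With these in place, Lemma \ref{l:tc1} gives the conclusion for the first tangent cone. For subsequent iterations I would repeat the same steps: given a stationary cone $\Gamma$ with $|A| \equiv 0$ on $\reg(\Gamma)$ and $\lambda(\Gamma) < 2$, its tangent cones are stationary cones by standard results, they inherit the entropy bound from the first step, and they are multiplicity one by the second step; the vanishing of $|A|$ on the regular set of the new tangent cone follows from Allard's theorem, which upgrades the multiplicity-one varifold convergence of the rescalings (already having $|A| \equiv 0$ on their regular parts) to smooth convergence on the regular set. The principal technical point I anticipate is the convergence step in the entropy bound: verifying rigorously that the Gaussian integrals genuinely pass to the limit under varifold convergence of the rescalings requires uniform mass control on arbitrarily large balls, which is precisely what the entropy bound on $\Sigma$ yields but which takes some care to write down cleanly.
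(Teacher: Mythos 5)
Your proof is correct and substantively matches the paper's: both arguments rest on translation/dilation invariance and lower semicontinuity of entropy under varifold limits with uniform mass bounds, the entropy bound $\lambda < 2$ forcing multiplicity one via density monotonicity for stationary cones, and Allard's theorem upgrading multiplicity-one varifold convergence to smooth convergence on the regular set so that $|A|\equiv 0$ passes to limits. The paper is shorter only because it observes that any iterated tangent cone can be realized directly as a blow up of $\Sigma$ itself via a diagonal sequence (Lemma \ref{l:tc1} already permits moving basepoints $y_i \to y$), so that a single application of Lemma \ref{l:tc1} suffices, whereas you unfold this into an induction that re-runs the Allard argument at each stage; the two mechanisms are interchangeable.
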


\begin{proof}
This follows   from Lemma \ref{l:tc1} since iterated tangent cones can be realized as blow ups by taking a diagonal sequence
and any limit has multiplicity one since  $\lambda (\Sigma) < 2$.
\end{proof}

We will need two elementary lemmas from blow up analysis:

\begin{Lem}	\label{l:elGa}
Suppose that $\Gamma \subset \RR^{n+1}$ is an $n$-dimensional stationary rectifiable varifold with $|A| \equiv 0$ on $\reg (\Gamma)$.
If $\Gamma$ is a cone, $\sing (\Gamma) \subset \{ 0 \}$ and $n > 1$, then $\sing (\Gamma) = \emptyset$.
\end{Lem}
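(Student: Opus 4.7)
The plan is to show that $\Gamma$ is supported on a single linear hyperplane through the origin, which is then smooth everywhere.

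First I would use $|A| \equiv 0$ to conclude that $\reg(\Gamma) = \Gamma \setminus \{0\}$ is a smooth, totally geodesic $n$-submanifold of $\RR^{n+1} \setminus \{0\}$. A standard integration argument (the tangent plane is parallel in $\RR^{n+1}$ along any curve in the submanifold) shows that each connected component $\Gamma'$ of $\reg(\Gamma)$ lies in a single $n$-dimensional affine subspace $P \subset \RR^{n+1}$. Since $\reg(\Gamma)$ is invariant under every dilation $D_\lambda$ ($\lambda > 0$) and the orbit $\{D_\lambda p : \lambda > 0\}$ of any $p \in \Gamma'$ is a connected subset of $\reg(\Gamma)$ containing $p$, this orbit lies in $\Gamma'$. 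Hence $\Gamma'$ is itself dilation-invariant, and a nonempty dilation-invariant subset of an affine hyperplane forces that hyperplane to contain the origin; so $P$ is linear.

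Next I would argue that $\Gamma' = P \setminus \{0\}$. The component $\Gamma'$ is open in $P$ (being a smooth $n$-submanifold of the hyperplane $P$) and is relatively closed in $P \setminus \{0\}$, since $\reg(\Gamma)$ is relatively closed in $\RR^{n+1} \setminus \{0\}$ and connected components of a locally connected space are themselves closed. Because $n > 1$, the punctured hyperplane $P \setminus \{0\}$ is connected, so the clopen subset $\Gamma'$ must be all of $P \setminus \{0\}$.

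Finally I would rule out multiple components. Suppose $\reg(\Gamma)$ contained two distinct components, sitting in distinct linear hyperplanes $P_1$ and $P_2$. Their intersection $P_1 \cap P_2$ is an $(n-1)$-dimensional linear subspace, and every nonzero point of it belongs to $\supp(\Gamma)$ but cannot be a regular point, since two transverse hyperplanes do not meet smoothly in a single $n$-manifold. For $n > 1$, $(P_1 \cap P_2) \setminus \{0\}$ is nonempty, contradicting $\sing(\Gamma) \subset \{0\}$. Therefore $\Gamma$ is supported on a single linear hyperplane, which is smooth everywhere including at the origin, so $\sing(\Gamma) = \emptyset$. The main subtlety I anticipate is establishing dilation-invariance of each individual component (not just of $\reg(\Gamma)$ as a whole), which is handled cleanly by the orbit-connectedness observation; the rest of the argument is essentially topological once the totally geodesic classification is in hand.
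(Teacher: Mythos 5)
Your proposal is correct and takes essentially the same route as the paper's proof: classify each component of $\reg(\Gamma)$ as (a subset of) a hyperplane via $|A|\equiv 0$, use the cone structure together with $\sing(\Gamma)\subset\{0\}$ to force each such hyperplane to pass through the origin and each component to be the full punctured hyperplane, and then observe that two distinct hyperplanes through $0$ in $\RR^{n+1}$ meet away from $0$ when $n>1$. You have simply unpacked the steps the paper compresses (the dilation-invariance of individual components and the clopen argument in $P\setminus\{0\}$), which is a fair amount of useful detail but not a different argument.
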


\begin{proof}
Since $|A|\equiv 0$ on the regular set,  the closure of each connected component of $\reg (\Gamma)$ is contained in a hyperplane.  Since $0$ is the only singular point and $\Gamma$ is a cone,  the closure of each connected component of  $\reg (\Gamma)$ must in fact be a hyperplane through $0$.  However, since two distinct hyperplanes  in $\RR^{n+1}$ with $0$  in their closure always intersect away from $0$ (since $n > 1$),   $\Gamma$   consists  of a single hyperplane through the origin and thus is, in particular, smooth, though of course it could have multiplicity.  
\end{proof}

The  above  lemma does not extend to $n=1$; in particular, three half-lines meeting at $0$ with angles $\frac{2\pi}{3}$ is a stationary configuration.  Thus something else, like an entropy bound, is needed to rule out such a singularity.    Also, as noted, it could have multiplicity.  

\vskip2mm
The second elementary lemma that we will need is often implicitly used in Federer type dimension reduction arguments.    It is the following:

\begin{Lem}	\label{l:dimred}
Suppose that $\Gamma \subset \RR^{n+1}$ is a stationary rectifiable varifold and $\Gamma$ is a cone.
If $y \in \sing (\Gamma) \setminus \{ 0 \}$, then every tangent cone to $\Gamma$ at $y$ is of the form
$\Gamma' \times \RR_y$ where $\RR_y$ is the line in the direction $y$ and $\Gamma'$ is a conical stationary varifold in $\RR^n$.
\end{Lem}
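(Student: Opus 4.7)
The plan is the classical Federer dimension-reduction argument: exploit the fact that $\Gamma$ is a cone about the origin to convert scale invariance at $0$ into translation invariance of the tangent cone at $y$ along the direction $y$, then apply the splitting theorem for stationary varifolds. I expect the only delicate issue to be bookkeeping at the varifold (as opposed to set-theoretic) level; the underlying algebraic identity is essentially one line.

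Let $C$ be a tangent cone to $\Gamma$ at $y$, realized as a varifold limit $\lambda_i(\Gamma - y) \to C$ for some sequence $\lambda_i \to \infty$. The key identity I will establish is
\begin{equation}
	\lambda_i(\Gamma - y) + s\,y \;=\; (\lambda_i - s)(\Gamma - y),
\end{equation}
valid for every $s \in \RR$ and every $i$ with $\lambda_i - s > 0$. The pointwise check is immediate from $\lambda_i(z-y) + sy = (\lambda_i - s)\bigl(\tfrac{\lambda_i}{\lambda_i-s}\,z - y\bigr)$ combined with the cone identity $\tfrac{\lambda_i}{\lambda_i-s}\,\Gamma = \Gamma$, which sends $\Gamma$ bijectively onto itself; the same identity lifts to the varifold level because dilation, translation, and $\mu\Gamma = \Gamma$ are all varifold statements. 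Since the right-hand side differs from $\lambda_i(\Gamma - y)$ only by the dilation factor $(\lambda_i - s)/\lambda_i \to 1$, both sides converge as varifolds to the same limit $C$ as $i \to \infty$. Consequently $C + sy = C$ for every $s \in \RR$, i.e., $C$ is invariant under translation along the line $\RR_y$.

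To conclude, I invoke the standard splitting lemma: a stationary integral rectifiable varifold in $\RR^{n+1}$ that is invariant under translation along a line $L$ decomposes as a product $C = \Gamma' \times L$ with $\Gamma'$ a stationary varifold in the orthogonal complement $L^\perp \cong \RR^n$. Applied with $L = \RR_y$ this yields $C = \Gamma' \times \RR_y$. Finally, since $C$ is a tangent cone it is a cone about the origin, and the product decomposition then forces $\Gamma'$ to be conical in $\RR^n$: dilating $c' + t\,e_y \in C$ by $\mu > 0$ produces $\mu c' + \mu t\, e_y \in C$, whence $\mu c' \in \Gamma'$. This gives the required product form.
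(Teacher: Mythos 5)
The paper states Lemma \ref{l:dimred} without proof, remarking only that it ``is often implicitly used in Federer type dimension reduction arguments'' --- so there is no proof in the paper to compare against. Your argument is the standard one and is correct: the algebraic identity $\lambda_i(\Gamma - y) + s\,y = (\lambda_i - s)(\Gamma - y)$, valid because $\Gamma$ is a cone about the origin, together with $(\lambda_i - s)/\lambda_i \to 1$, shows that the tangent cone $C$ is invariant under translation by $\RR_y$; the splitting theorem for stationary varifolds (that translation invariance along a line $L$ forces $C = \Gamma' \times L$ with $\Gamma'$ stationary in $L^{\perp}$) and the fact that tangent cones to stationary varifolds are genuine cones --- both consequences of the monotonicity formula --- then finish the argument. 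You have in effect supplied the routine but nontrivial verification that the paper chose to omit.
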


\begin{proof}[Proof of Proposition \ref{p:tc2}]
By Allard's regularity theorem, it is enough to show that every tangent cone to $\Sigma$ is a multiplicity one hyperplane.  We will prove this by contradiction.  Suppose, therefore, that $x\in \sing (\Sigma)$, $\Gamma_1$ is a tangent cone to $\Sigma$ at $x$, and $\Gamma_1$ is not a multiplicity one hyperplane.

By assumption,  $\lambda (\Sigma) < 3/2$ 
and  $\Sigma$ satisfies \eqr{e:constC}; therefore, Corollary \ref{c:tc2} implies that $\Gamma_1$ is stationary, satisfies $|A| \equiv 0$ on $\reg (\Gamma_1)$, and
\begin{equation}
	\lambda (\Gamma_1) \leq \lambda (\Sigma) < 3/2 \, .
\end{equation}
  Since $\Gamma_1$ is not a hyperplane ($x$ is   singular), Lemma \ref{l:elGa} 
  gives a singular point $y_1 \in \sing (\Gamma_1) \setminus \{ 0 \}$.  Thus, 
Lemma \ref{l:dimred}  gives a multiplicity one tangent cone $\Gamma_2$ to $\Gamma_1$ at $y_1$ with
\begin{equation}
	\Gamma_2 = \Gamma_2' \times \RR_{y_1} \, , 
\end{equation}
where $\Gamma_2'$ is a stationary cone in $\RR^n$.  Since $y$ was a singular point, $\Gamma_2'$ is not a hyperplane.
Furthermore, since Corollary \ref{c:tc2} applies to all iterated tangent cones, we know that $\Gamma_2$, and thus also $\Gamma_2'$ have
$|A| \equiv 0$ on their regular parts.

We can now repeat the argument to get a stationary cone $\Gamma_3' \subset \RR^{n-1}$ that is multiplicity one, is not a hyperplane, and also has $|A| \equiv 0$ on its regular part.  

Repeating this $n-1$ times eventually gives a stationary cone $\Gamma_{n}' \subset \RR^2$ that is not a line and where $\Gamma_{n}' \times \RR^{n-1}$ is an iterated tangent cone for $\Gamma$.  In particular, since entropy is preserved under products with $\RR$, we must have that
\begin{equation}
	\lambda (\Gamma_n') < 3/2 \, .
\end{equation}
This implies that $\Gamma_n'$ consists of at most two rays through the origin.  However, the only such configuration that is stationary is when there are exactly two rays meeting at angle $\pi$ to form a line.  This contradiction completes the proof.
\end{proof}

\section{Low entropy singularities for mean convex rescaled MCF}

An important tool in this paper is a classification of singularities for rescaled MCF starting from a closed 
rescaled mean convex hypersurface with low entropy.  Here, ``rescaled mean convex'' means that \eqr{e:mcv} holds.
This classification will be given in Corollary \ref{c:higher} below.  The first step in this direction is the following regularity theorem and partial classification:
 
\begin{Pro}	\label{p:higher}
Let $M_t$ be a rescaled MCF of closed hypersurfaces in $\RR^{n+1}$,
satisfying \eqr{e:mcv} on $[0,T)$ and $\lambda (M_0) < 3/2$.  
If there is a singularity at $(y,T)$, then there is a tangent flow $T_t$ with:
\begin{itemize}
\item $T_{-1}$ is a smooth, embedded, self-shrinker with $\lambda (T_{-1}) < 3/2$ and multiplicity one.
\item $T_{-1}$ has $H \geq 0$ and is not flat.
\end{itemize}
\end{Pro}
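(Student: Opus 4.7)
The plan is to extract a tangent flow at the singular point, inherit both the rescaled mean-convexity of $M_0$ and the entropy bound, and then upgrade the tangent slice to a smooth self-shrinker via Proposition \ref{p:tc2}. Since rescaled MCF is just a space-time reparametrization of ordinary MCF, Huisken's monotonicity together with the Ilmanen--White blow-up procedure produces, at the singular spacetime point $(y,T)$, a tangent flow $T_t$ whose $t=-1$ slice is an $F$-stationary integral rectifiable varifold satisfying
\[
\lambda(T_{-1}) \leq \lambda(M_0) < 3/2,
\]
because entropy is invariant under parabolic rescalings and nonincreasing along the flow.

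Next I verify that $T_{-1}$ has multiplicity one. By Section~7 of \cite{CM1} the entropy of an $F$-stationary varifold equals its $F$-functional, and the $F$-functional is additive in multiplicity; since $F$ of any non-empty hypersurface is at least the value on a hyperplane, namely $1$, a multiplicity $k \geq 2$ would force $\lambda(T_{-1}) \geq 2$, contradicting $\lambda(T_{-1}) < 3/2$. With multiplicity one in hand I apply Lemma \ref{l:curvbd}, which gives a constant $C>0$ with $|A| \leq C\,H$ on $\reg(T_{-1})$; in particular $H \geq 0$ on the regular set.

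Now I invoke Proposition \ref{p:tc2}: $T_{-1}$ is $F$-stationary, satisfies $\lambda(T_{-1}) < 3/2$, and obeys the pointwise bound \eqr{e:constC}, so $T_{-1}$ is smooth. Embeddedness is automatic because $\lambda(T_{-1}) < 2$, as remarked in the introduction. The inequality $|A| \leq C\,H$ then extends to all of $T_{-1}$, so $H \geq 0$ holds everywhere. Finally, $T_{-1}$ cannot be flat: a multiplicity one hyperplane through the origin as tangent flow would, by White's version of Brakke's regularity theorem, force the rescaled MCF to be smooth in a spacetime neighborhood of $(y,T)$, contradicting the assumption that $(y,T)$ is singular.

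The main obstacle I anticipate is less a single hard calculation than the careful orchestration of the hypotheses: the entropy bound $\lambda(M_0) < 2$ is what supplies multiplicity one and thereby activates Lemma \ref{l:curvbd}, while the sharper $\lambda(M_0) < 3/2$ is what is needed to run the dimension-reduction inside Proposition \ref{p:tc2}. The conceptually deepest input, the iterated tangent cone argument of Proposition \ref{p:tc2}, has already been proved; here its role is automatic once the multiplicity one property and the bound $|A| \leq C\,H$ are both established for $T_{-1}$.
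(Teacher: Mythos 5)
Your proof is correct and follows essentially the same route as the paper's: extract a tangent flow via the Ilmanen--White blow-up, inherit the entropy bound from Huisken's monotonicity to get multiplicity one and embeddedness, invoke Lemma \ref{l:curvbd} for the pointwise bound $|A| \leq C\,H$ (hence $H \geq 0$), and conclude smoothness via Proposition \ref{p:tc2}. You supply slightly more explicit justification than the paper's terse proof for two points it leaves implicit — the multiplicity-one argument via additivity of $F$ and the density lower bound, and the exclusion of the flat plane via Brakke--White regularity at a genuinely singular spacetime point — but these are exactly the arguments the paper has in mind (the latter is stated in the remark following Theorem \ref{t:huisken}).
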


 \begin{proof} 
The blow up argument given in lemma $8$ of \cite{I1} (cf. \cite{W1}, \cite{W3}) gives a tangent flow $T_t$ so that $T_{-1}$ is an $F$-stationary rectifiable varifold.
  Huisken's monotonicity and the properties of the entropy in \cite{CM1} (see lemma $1.11$ and section $7$ there) give that
\begin{align}
 \lambda (T_{-1}) \leq \lambda (M_0) < 3/2 \, , 
\end{align}
so that $T_{-1}$ has multiplicity one and is embedded.
Hence,  by Lemma \ref{l:curvbd}, 
\begin{itemize}
\item $|A| \leq C \, H$  on the regular set $\reg (T_{-1})$.
\end{itemize}
Finally,   Proposition \ref{p:tc2}
gives that
 $M_{-1}$ is smooth.  
 \end{proof}

We will combine Proposition \ref{p:higher} with the following 
  classification of smooth, embedded mean convex self-shrinkers in arbitrary dimension from theorem $0.17$ in
  \cite{CM1}:

 \begin{Thm}[\cite{CM1}]	\label{t:huisken}
 $\SS^k\times \RR^{n-k}$ are the only smooth complete embedded self-shrinkers without boundary, with polynomial volume growth, and   $H \geq 0$ in $\RR^{n+1}$.
  \end{Thm}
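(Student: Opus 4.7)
The plan is to follow Huisken's classification strategy for mean convex self-shrinkers, extended from the convex/compact setting via weighted Gaussian integration by parts that exploits the polynomial volume growth hypothesis.

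First, I would establish a maximum principle dichotomy for $H$. The Simons-type identity in Lemma \ref{l:simonsA}, specialized to the stationary case where $M_t \equiv \Sigma$ (so $\partial_t \equiv 0$ on geometric quantities), reduces to $L H = H$, which rewrites as the drift-Laplacian equation $\cL H = (1/2 - |A|^2) \, H$. Since $\cL$ is strictly elliptic with smooth coefficients on the smooth hypersurface $\Sigma$, the strong maximum principle applied to $H \geq 0$ yields the dichotomy: either $H \equiv 0$ or $H > 0$ everywhere on $\Sigma$. In the former case, the shrinker equation \eqr{e:ss} forces $\langle x, \nn \rangle \equiv 0$, so $x$ is tangent to $\Sigma$ everywhere; this makes $\Sigma$ invariant under dilations, hence a smooth embedded cone without boundary, which must be a hyperplane through the origin, realizing the $k=0$ case $\SS^0 \times \RR^n = \RR^n$ (or more precisely its $k=0$ limit; it is the flat case).

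Second, in the case $H > 0$, I would derive a Simons-type identity for the normalized second fundamental form $B_{ij} = h_{ij}/H$. The idea is to combine the tensor identity $L \, h_{ij} = h_{ij}$ (the static analog of the first equation of Lemma \ref{l:simonsA}) with $L H = H$ via a quotient rule like \eqr{e:quotient} to produce
\begin{equation}
\cL \, B_{ij} = - \frac{2}{H} \, \langle \nabla H, \nabla B_{ij} \rangle \, ,
\end{equation}
so that $|B|^2$ satisfies a drift-diffusion inequality of the form
\begin{equation}
\cL \, |B|^2 \geq 2 \, |\nabla B|^2 - \frac{2}{H} \, \langle \nabla H, \nabla |B|^2 \rangle \, .
\end{equation}
Multiplying by the Gaussian weight $\e^{-|x|^2/4}$ converts $\cL$ into a self-adjoint operator, and the polynomial volume growth hypothesis combined with the rapid decay of $\e^{-|x|^2/4}$ controls both the boundary terms at infinity and the cutoff errors. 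With a suitable cutoff function $\eta_R$ supported in $B_{2R}$, integration by parts against $\eta_R^2 \e^{-|x|^2/4}$ and letting $R \to \infty$ forces $\nabla B \equiv 0$.

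Third, once $B = A/H$ is parallel, a standard de Rham style splitting argument gives that $\Sigma$ is a Riemannian product $\SS^k \times \RR^{n-k}$: the kernel and image of $B$ (as a self-adjoint endomorphism of $T\Sigma$) are parallel distributions, and $B$ restricted to its image is a nonzero multiple of the identity, producing an umbilic factor which closes up to a sphere. The normalization $H = \langle x, \nn \rangle /2$ then forces the sphere to have radius $\sqrt{2k}$.

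The main obstacle is the second step: justifying the weighted integration by parts when $H > 0$ could approach zero at infinity (so that $1/H$ is unbounded and $\log H$ is not a priori controlled). Handling this requires a careful choice of cutoff, together with the a priori bound $|x|/H$ from the shrinker equation and the polynomial volume growth, to close the estimate. Once $\nabla B \equiv 0$ is established, the splitting is purely Riemannian and straightforward.
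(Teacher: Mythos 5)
This theorem is not proved in the paper you were given; it is quoted as Theorem~$0.17$ of \cite{CM1}, and the authors of \cite{CM1} in turn built on Huisken's earlier results in \cite{H1} and \cite{H2}. So the comparison is really to the argument in \cite{CM1}, and your proposal reconstructs the broad outline of that strategy (Simons-type identity, quotient/drift maximum principle on $A/H$, weighted integration by parts, then a de~Rham splitting once $\nabla(A/H)\equiv 0$). Your Step~1 is correct and complete: $LH=H$ gives $\cL H = (\tfrac12-|A|^2)H$, the strong maximum principle forces $H\equiv 0$ or $H>0$, and $H\equiv 0$ together with the shrinker equation gives a complete smooth embedded cone, hence a hyperplane. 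Your Step~3 is also standard once one knows $B=A/H$ is parallel.

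The genuine gap is in Step~2, and it is precisely the gap that Theorem~$0.17$ of \cite{CM1} was written to close. Your quotient computation is fine: for a static shrinker, $LA=A$ and $LH=H$ give
$\cL B = -\tfrac{2}{H}\nabla_{\nabla H}B$ and hence $\cL|B|^2 + \tfrac{2}{H}\langle\nabla H,\nabla|B|^2\rangle = 2|\nabla B|^2$. But the natural self-adjoint weight for the drifted operator $\cL + 2\langle\nabla\log H,\nabla\cdot\rangle$ is $H^2\e^{-|x|^2/4}\,d\mu$, not $\e^{-|x|^2/4}\,d\mu$, and after cutting off and absorbing you need at minimum $\int_\Sigma |A|^2\,\e^{-|x|^2/4}<\infty$ together with control of $\nabla|B|$ against $|B|\,|\nabla B|$ in that weighted space. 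Neither follows from polynomial volume growth alone; indeed this integrability is exactly what Huisken's hypothesis $\sup|A|<\infty$ in \cite{H2} was supplying, and removing it is the point of \cite{CM1}. Your suggestion to use ``the a priori bound $|x|/H$ from the shrinker equation'' does not help: the shrinker equation gives $H=\tfrac12\langle x,\nn\rangle\le \tfrac12|x|$, an \emph{upper} bound on $H$, so it says nothing about $1/H$, $\nabla\log H$, or the growth of $|A|$. The argument in \cite{CM1} instead works with $\log(|A|/H)$, uses a refined Kato/Simons inequality to extract a good term $|\nabla A|^2-|\nabla|A||^2$, and carries out the weighted $L^2$ estimates with more care (including showing the relevant quantities are in the weighted Sobolev space before integrating by parts). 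As written, your Step~2 identifies the obstacle but does not overcome it, so the proof is incomplete at the critical point.

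One more minor remark: when you invoke the de~Rham splitting, you should also observe that the product $\SS^k(r)\times\RR^{n-k}$ satisfies the shrinker equation if and only if $r=\sqrt{2k}$; you mention the radius normalization but it is worth noting this is what pins down the sphere factor uniquely.
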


  The $\SS^k$ factor in Theorem \ref{t:huisken} is round and has radius $\sqrt{2k}$; we allow the possibilities of a 
  sphere ($n-k = 0$) or a
  hyperplane (i.e., $k=0$), although Brakke's theorem rules out the multiplicity one 
  hyperplane as a tangent flow at a singular point.

The classification of  smooth embedded self-shrinkers with $H \geq 0$ began with
  \cite{H1},  where Huisken showed that round spheres are the  only closed ones. In   \cite{H2}, Huisken  showed that 
   generalized cylinders $\SS^k\times \RR^{n-k}\subset \RR^{n+1}$ are the only open ones with
  polynomial volume growth  and $|A|$ bounded.    Theorem $0.17$ in \cite{CM1} 
 completed the classification by  removing the $|A|$ bound.

\vskip2mm
Combining Proposition \ref{p:higher} with Theorem \ref{t:huisken} gives:

\begin{Cor}	\label{c:higher}
Let $M_t$ be a rescaled MCF of closed hypersurfaces in $\RR^{n+1}$,
satisfying \eqr{e:mcv} on $[0,T)$ and $\lambda (M_0) < 3/2$.  
If there is a singularity at $(y,T)$, then there is a multiplicity one tangent flow  $T_t$  
of the form  $\SS^k\times \RR^{n-k}$ for some $k >0$.
\end{Cor}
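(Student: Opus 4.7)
The plan is to combine the two cited results directly. First I would invoke Proposition \ref{p:higher}: under the hypotheses of the corollary (closed hypersurfaces, rescaled mean convex via \eqref{e:mcv}, entropy below $3/2$, singularity at $(y,T)$), there exists a tangent flow $T_t$ whose time slice $T_{-1}$ is a smooth, embedded, multiplicity one self-shrinker with $H \geq 0$, $\lambda(T_{-1}) < 3/2$, and not flat.

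Next, I would apply Theorem \ref{t:huisken} to classify $T_{-1}$. To do so, the only nontrivial verification is that $T_{-1}$ satisfies the polynomial volume growth hypothesis of the theorem. This is immediate from finite entropy: since $\lambda(T_{-1}) < 3/2 < \infty$, in particular the Gaussian area $F(T_{-1})$ is finite, which forces
\begin{equation}
\mathrm{Vol}(T_{-1} \cap B_R) \leq C \, R^n
\end{equation}
for all $R \geq 1$ by a standard estimate (one compares $\int e^{-|x|^2/4}$ on $T_{-1} \cap (B_{2R}\setminus B_R)$ against the volume of that annulus). Thus Theorem \ref{t:huisken} applies and identifies $T_{-1}$ with some generalized cylinder $\SS^k \times \RR^{n-k}$.

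Finally, I would rule out the case $k = 0$. If $k=0$, then $T_{-1}$ is a multiplicity one hyperplane, which contradicts the non-flatness conclusion of Proposition \ref{p:higher} (or alternatively is ruled out as a tangent flow at a singular point by Brakke's theorem, as noted after Theorem \ref{t:huisken}). Hence $k \geq 1$, i.e.\ $k > 0$, and the corollary follows.

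Since both ingredients are already in hand, there is essentially no obstacle; the only place where one must be careful is confirming that the hypotheses of Theorem \ref{t:huisken} are all met by the output of Proposition \ref{p:higher}, in particular the polynomial volume growth condition, which as above follows automatically from the entropy bound.
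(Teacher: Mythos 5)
Your overall strategy is exactly the paper's: the paper presents Corollary \ref{c:higher} with no separate proof beyond the remark ``Combining Proposition \ref{p:higher} with Theorem \ref{t:huisken} gives:'', so the argument is precisely the combination you spell out, including ruling out $k=0$ via non-flatness.

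One detail in your verification of polynomial volume growth is off, however. Finiteness of $F(T_{-1})$ alone does \emph{not} force polynomial volume growth: on the annulus $B_{2R}\setminus B_R$ the weight $e^{-|x|^2/4}$ is only bounded below by $e^{-R^2}$, so comparing $\int e^{-|x|^2/4}$ with the annular volume yields $\mathrm{Vol}\bigl(T_{-1}\cap(B_{2R}\setminus B_R)\bigr)\leq C\,e^{R^2}$, an exponential bound rather than a polynomial one. What you actually need (and have) is the full entropy bound, used at a variable scale: taking $x_0=0$ and $t_0=R^2$ in the definition of $\lambda$ gives
\begin{equation}
\lambda(T_{-1})\geq (4\pi R^2)^{-n/2}\int_{T_{-1}\cap B_R} e^{-|x|^2/(4R^2)}\geq e^{-1/4}(4\pi R^2)^{-n/2}\,\mathrm{Vol}(T_{-1}\cap B_R)\, ,
\end{equation}
so $\mathrm{Vol}(T_{-1}\cap B_R)\leq e^{1/4}(4\pi)^{n/2}\lambda(T_{-1})\,R^n$. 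With this correction the proof is complete and agrees with the paper's intended argument.
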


Combining this with the monotonicity of the entropy  will give the following lower bound for entropy and topological rigidity for rescaled mean convex hypersurfaces:

\begin{Pro}	\label{p:rigid}
Let $M^n$ be a  closed hypersurface    
satisfying \eqr{e:mcv}.  If $\lambda (M) <  \min \{  \lambda (\SS^{n-1}) , \frac{3}{2} \}$, then
$M$  is diffeomorphic 
to $\SS^n$ and
$\lambda (M) \geq   \lambda (\SS^n)$.
\end{Pro}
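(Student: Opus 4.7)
The plan is to flow $M$ by rescaled MCF, show a finite-time singularity forms, classify its tangent flow via Corollary \ref{c:higher}, and use entropy monotonicity together with the hypothesis $\lambda (M) < \lambda (\SS^{n-1})$ to force the singular model to be the round $\SS^n$. Both the entropy lower bound and the diffeomorphism type should follow.

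First I would reduce to the case where $H - \frac{1}{2} \langle x , \nn \rangle$ is strictly positive everywhere on $M$. If $H - \frac{1}{2} \langle x , \nn \rangle \equiv 0$, then $M$ is itself a closed self-shrinker; if it is round, both conclusions are immediate, and otherwise Lemma \ref{l:perturb} produces a nearby hypersurface $\Gamma$, diffeomorphic to $M$ as a small normal graph, with $\lambda(\Gamma) < \lambda (M)$ and $H - \frac{1}{2} \langle x , \nn \rangle > 0$ on $\Gamma$. It suffices to prove the proposition for $\Gamma$, since the diffeomorphism type transfers back and $\lambda (M) > \lambda(\Gamma) \geq \lambda (\SS^n)$. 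In the remaining case $H - \frac{1}{2} \langle x , \nn \rangle$ is not identically zero, so by Lemma \ref{l:mcvx1}(2) the rescaled mean curvature becomes strictly positive everywhere after rescaled MCF flows for arbitrarily short time; replacing $M$ by $M_{t_0}$ (which has no larger entropy by Huisken's monotonicity) I may assume $H - \frac{1}{2} \langle x , \nn \rangle \geq c > 0$ on $M$.

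Next I would extract the singularity and classify it. Lemma \ref{l:sing} gives a first singular time $T < \infty$ for the rescaled MCF, while Lemma \ref{l:mcvx1} preserves rescaled mean convexity along the flow. Since Huisken's monotonicity keeps $\lambda (M_t) \leq \lambda (M) < 3/2$, Corollary \ref{c:higher} produces a multiplicity one tangent flow at some singular point $(y,T)$ of the form $\SS^k \times \RR^{n-k}$ with $k > 0$. Using $\lambda (\SS^k \times \RR^{n-k}) = \lambda (\SS^k)$ and the fact (from \cite{St}) that $\lambda (\SS^k)$ is strictly decreasing in $k$, the chain
\begin{equation}
	\lambda (\SS^k) \;\leq\; \lambda (M) \;<\; \lambda (\SS^{n-1})
\end{equation}
forces $k = n$, which already gives $\lambda (M) \geq \lambda (\SS^n)$.

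The topological claim follows from smooth convergence at $(y,T)$: White's version of Brakke's regularity theorem gives a parabolic rescaling of $M_t$ about $(y,T)$ converging smoothly to the round $\SS^n$, so for $t$ slightly less than $T$ a connected component of $M_t$ is a small normal graph over the sphere and hence diffeomorphic to $\SS^n$. Because the rescaled MCF is nested and smooth on $[0,T)$ by Lemma \ref{l:mcvx1}, this component persists back to $t = 0$ with the same diffeomorphism type, and any further connected component of $M$ must itself develop a singularity by Lemma \ref{l:sing} whose tangent flow is again $\SS^n$ by the same entropy argument. The main obstacle is precisely this last step: promoting the local spherical singularity to a global diffeomorphism statement for $M$, where one must track the connected components of the nested flow carefully and, in the presence of several components, apply the same analysis to each.
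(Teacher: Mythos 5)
Your proposal follows essentially the same route as the paper: reduce to strict rescaled mean convexity via Lemma \ref{l:perturb} (when $M$ is a non-round self-shrinker) or via Lemma \ref{l:mcvx1}(2) and a short initial run of the flow, invoke Lemma \ref{l:sing} for finite-time singularity, classify the tangent flow as $\SS^k\times\RR^{n-k}$ via Corollary \ref{c:higher}, use entropy monotonicity and Stone's ordering to force $k=n$, and conclude the diffeomorphism from White's Brakke-regularity giving smooth spherical collapse. The connected-components subtlety you flag at the end is real but is also elided in the paper's own one-line conclusion; for a connected $M$ (the standing assumption for a hypersurface here), the nested smooth flow stays connected on $[0,T)$, and Brakke regularity forces $M_{T-s}$ for small $s$ to coincide with the small almost-round sphere near $y$, so the global diffeomorphism follows without needing to handle multiple components.
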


\begin{proof}
We can assume that $H - \frac{1}{2} \, \langle x , \nn \rangle > 0$ at   least at one point.
To see this, suppose instead that
$H - \frac{1}{2} \, \langle x , \nn \rangle \equiv 0$,  so that $M$ is a self-shrinker.  If $M = \SS^n$, then we are done.  Otherwise,  
Lemma \ref{l:perturb} gives a nearby graph  
 $\tilde{M}$ over $M$ with $H - \frac{1}{2} \, \langle x , \nn \rangle > 0$  and
$\lambda (\tilde{M}) < \lambda (M)$.

Let $M_t$ be the rescaled MCF with $M_0 = M$, so that     Lemma \ref{l:mcvx1} gives
  \begin{equation}	\label{e:post}
 	H - \frac{1}{2} \, \langle x , \nn \rangle > 0 {\text{ for all }} t > 0  \, .
\end{equation}
Lemmas \ref{l:sing} and \ref{l:mcvx1} give a singularity in finite time   inside of $M$.  Thus, Corollary \ref{c:higher} gives a multiplicity one tangent flow $T_t$ 
at this point of the form
	$ \SS^k\times \RR^{n-k}$  for some $k >0$.
  By the monotonicity of the entropy under MCF (lemma $1.11$ in \cite{CM1}), its invariance under dilation, and its lower semi-continuity under limits, we have
\begin{equation}
	\lambda (T_{-1}) \leq \lambda (M) < \min \{  \lambda (\SS^{n-1}) , 3/2 \} \, .
\end{equation}
By \cite{St}, we have $\lambda (\SS^{n}) <  \lambda (\SS^{n-1} \times \RR) < \dots$, so we conclude that $k=n$ and 
$
	\lambda (\SS^{n}) \leq \lambda (M)$. 
	 Finally,   White's version, \cite{W2},  of Brakke's   theorem implies that $T_{-1}$ is the smooth limit of rescalings of the $M_t$'s.  In particular, $M$ itself is diffeomorphic to $T_{-1} = \SS^n$.
 
 \end{proof}

\section{Proof of the main theorem}

 \begin{proof} 
 [Proof of Theorem \ref{t:minentropy}]
Let $\Sigma$ be a closed self-shrinker in $\RR^{n+1}$
with  $\lambda (\Sigma)< \min \{  \lambda (\SS^{n-1} ) , 3/2 \}$.
By
  Proposition \ref{p:rigid}, $\tilde{\Sigma}$ is diffeomorphic to $\SS^n$ and
$\lambda (\tilde{\Sigma})  \geq \lambda (\SS^n)$.

We will show that there is a gap, i.e.,   some $\epsilon > 0$ so that $\lambda (\Sigma)  \geq \lambda (\SS^n) + \epsilon$ if $\Sigma$ is not round.
 Suppose instead that there is a sequence of closed self-shrinkers $\Sigma_i \ne \SS^n$ with
\begin{equation}
	\lambda (\SS^n) \leq \lambda (\Sigma_i) < \lambda (\SS^n) + 2^{-i} \, .
\end{equation}
By Huisken, \cite{H1}, none of the $\Sigma_i$'s is strictly convex  since they are not round.

Perturbing the $\Sigma_i$'s with Lemma \ref{l:perturb} and then applying rescaled MCF to the perturbations gives a sequence of rescaled MCF's $\tilde{M}_{i,t}$ so that
\begin{itemize}
 \item Each initial hypersurface $\tilde{M}_{i,0}$ is not strictly convex and has $\lambda (\tilde{M}_{i,0}) < \lambda (\Sigma_i)$.
 \item 
 Each $\tilde{M}_{i,t}$ has a multiplicity one spherical singularity in finite time.
 \end{itemize}
 The second claim follows from  Lemma \ref{l:sing}.
 
 We can now create a new sequence of rescaled MCF's $M_{i,t}$ by rescaling the $\tilde{M}_{i,t}$'s about the spherical singularity so that the new flows
 satisfy:
 \begin{enumerate}
\item Each $M_{i,t}$
converges smoothly to the round sphere as $t \to \infty$.
 \item Each initial hypersurface ${M}_{i,0}$ is a $C^2$ graph over $\SS^n$ with $C^2$ norm exactly  $\epsilon > 0$.
 \item Every hypersurface    ${M}_{i,t}$ for $t \geq 0$ is a $C^2$ graph over $\SS^n$ with $C^2$ norm at most  $\epsilon$.

 \end{enumerate}
 By (3), we can choose a subsequence that converges smoothly to a limiting rescaled MCF $M_t$.     Now consider the $F$ functional along $M_t$.  By (1), we have
 \begin{align}
 	\lim_{t \to \infty} F(M_t) = \lambda (\SS^n) \, .
\end{align}
On the other hand, we know that
\begin{align}
	F( M_0) \leq \lambda (M_0) \leq \liminf \lambda (\Sigma_i ) = \lambda (\SS^n) \, .
\end{align}
Since the rescaled MCF is the gradient flow for $F$, we see that the flow must be constant and, thus, every $M_t$ is round.  This contradicts (2) which says that the initial hypersurfaces are strictly away from the round sphere, completing the proof.

\end{proof}

\end{document}